\numberwithin{equation}{section} 
\newtheorem{theorem}{Theorem}[section]
\newtheorem{lemma}{Lemma}[section]
\newtheorem{remark}{Remark}[section]
\newtheorem{example}{Example}[section]
\newtheoremstyle{mythm}{1.5ex plus 1ex minus .2ex}{1.5ex plus 1ex minus .2ex}
{\song}{\parindent}{\song\bfseries}{}{1em}{}
\theoremstyle{mythm}
\begin{document}

\begin{frontmatter}

\title{A sparse grid discrete ordinate discontinuous Galerkin method for the radiative transfer equation}
\date{Version 0.0, June 12, 2019}
\author{Jianguo Huang\footnote{Corresponding author.}}
\ead{jghuang@sjtu.edu.cn}
\author{Yue Yu}
\ead{terenceyuyue@sjtu.edu.cn}
\address{School of Mathematical Sciences, and MOE-LSC, Shanghai Jiao Tong University\\
Shanghai 200240, China}

\begin{abstract}
  The radiative transfer equation is a fundamental equation in transport theory and applications, which is a 5-dimensional PDE in the stationary one-velocity case, leading to great difficulties in numerical simulation. To tackle this bottleneck, we first use the discrete ordinate technique to discretize the scattering term, an integral with respect to the angular variables, resulting in a semi-discrete hyperbolic system. Then, we make the spatial discretization by means of the discontinuous Galerkin (DG) method combined with the sparse grid method. The final linear system is solved by the block Gauss-Seidal iteration method. The computational complexity and error analysis are developed in detail, which show the new method is more efficient than the original discrete ordinate DG method. A series of numerical results are performed to validate the convergence behavior and effectiveness of the proposed method.
\end{abstract}

\begin{keyword}
Radiative transfer equation\sep Sparse grid method \sep Discrete ordinate method \sep Discontinuous Galerkin method
\end{keyword}

\end{frontmatter}

\section{Introduction}

Radiation transport is a physical process of energy transfer in the form of electromagnetic radiation which is affected by absorption, emission and scattering as it passes through the background materials. The radiative transfer equation (RTE) is an important mathematical model used to describe these interactions, finds applications in a wide variety of subjects, including neutron transport, heat transfer, optics, astrophysics, inertial confinement fusion, and high temperature flow systems, see for examples  \cite{Han-Huang-Eichholz-2010,Case-Zweifel-1967,Duderstadt-Martin-1978,Agoshkov1998,Golse-Jin-Levermore-1999,Tang-2009}.

The RTE can be viewed as a hyperbolic-type integro-differential equation. Even for the stationary monochromatic RTE, it is five-dimensional in the phase space, and hence cannot have a closed-form solution in general.  Thus, the numerical solution of the equation is unavoidable and critical in applications. In history, the Monte-Carlo method is a typical approach for numerical simulation (cf. \cite{Caflisch-1998} and the references therein). The advantage is its simplicity and dimension-free convergence, and the weakness is its heavy computational cost and slow convergence. Until now, there have developed many other numerical methods as well. For the angular discretization, the typical methods include discrete ordinate methods (or $S_N$ methods) and spherical harmonic methods (or $P_N$ method); for the spatial discretization, the typical methods include finite difference methods, finite element methods and spectral methods. We refer to \cite{Lewis-Miller-1984,Balsara-2001,Atkinson-Han-2012,Han-Huang-Eichholz-2010,
Case-Zweifel-1967,Frank-Klar-Larsen-2007,Larsen-Morel-2010,Golse-Jin-Levermore-1999} for details. Due to the flexibility and easy implementation, the discrete ordinate method is frequently used for angular discretization in practice. If the spatial domain is regular, this semi-discrete method is further discretized by the Chebyshev spectral method in \cite{Kim-Moscoso-2002,Edstrom-2005,Asadzadeh-Kadem-2006} and the meshless discretization in \cite{Sadat-2006,Kindelan-Bernal-2010,Wang-Sadat-Tan-2014}. In recent years, the positivity-preserving schemes are also developed very technically in \cite{Yuan-Cheng-Shu-2016,Dan-Cheng-Shu-2018,Zhang-Cheng-Qiu-2019}.
For numerical solvers such as source iteration and multigrid algorithms, one can refer to \cite{Chang-Manteuffel-2007,Adams-Larsen-2002,Sheng-Wang-Han-2016,Shao-Sheng-Wang-2020}.

On the other hand, except the Monte-Carlo method, all the methods mentioned above solve the problems with reduced dimensions. In this paper, we intend to attack the problem in its original form with 3-spatial variables and 2-angular variables. In this case, most usual methods suffer from the so-called ``the curse of dimension", which indicates the low rate of convergence in terms of number of degrees of freedom due to the high dimensionality of the underlying problem. To the best of our knowledge, the sparse grid method, also called the sparse tensor product method, is an effective way to overcome the bottleneck. Historically, the idea of sparse grids can be traced back to Smoljak's construction of multivariate quadrature formulas using combinations of tensor products of suitable one-dimensional formulas (cf. \cite{Smoljak-1963,Gerstner-Griebel-1998}). More recently, the systematic and thorough studies on the method can be found in \cite{Zenger-1990,Griebel-1991,Griebel-1998,Gerstner-Griebel-1998}. In addition, several sparse grid methods are devised in \cite{Widmer-Hiptmair-Schwab-2008,Grella-Schwab-2011} for solving the RTE through conforming spatial discretization. However, according to the computational experience, it is preferable to use the discontinuous Galerkin (DG) method for spatial discretization for hyperbolic problems (cf. \cite{Brezzi-Cockburn-Marini-2006,Cockburn-2003,Brezzi-Marini-Suli-2004}), in order to capture non-smooth physical solutions. In \cite{Wang-Tang-Guo-2016}, the sparse grid technique combined with the DG method has been developed for elliptic equations. This method is also applied to transport equations in \cite{Guo-Cheng-2016,Guo-Cheng-2017}, but the scattering effect is not considered. The adaptive analogues of their methods are also given in \cite{Guo-Cheng-2017,Tao-Jiang-Cheng-2019}.

In this paper, we are intended to propose and analyze a sparse grid DG method to solve the RTE, following the ideas in \cite{Han-Huang-Eichholz-2010} and \cite{Guo-Cheng-2016}. Unlike the studies in \cite{Widmer-Hiptmair-Schwab-2008,Grella-Schwab-2011}, the DG method will be used to carry out the spatial discretization. And different from \cite{Guo-Cheng-2016}, we will discuss in detail the efficient solution of the 5-dimensional RTE with scattering effect. Concretely speaking, the discrete ordinate technique is first applied to discretize the scattering term, an integral with respect to the angular variables, by simply picking several directions spanning the solid angle, resulting in a semi-discrete coupled hyperbolic system. In view of the hyperbolic nature of the semi-discrete system, the DG method is further employed for spatial discretization, yielding a fully discrete method. To overcome the curse of dimension, the sparse DG space is constructed by using the techniques in wavelet analysis to replace the original piecewise polynomial approximation space. We achieve the complexity analysis and error analysis of the method using some arguments in \cite{Han-Huang-Eichholz-2010} and \cite{Guo-Cheng-2016}, which show the new approach can greatly reduce the spatial degrees of freedom while keeping almost the same accuracy up to multiplication of an $\log$ factor. For the resulting linear system, considering its block structure, we solve it using the block Gauss-Seidal iteration method. A series of numerical examples are reported to validate the accuracy and performance of the proposed method. Furthermore, we also extend the method to solve the RTE efficiently for some non-tensor product spatial domains in two dimensions.

We end this section by introducing some notations and symbols frequently used in this paper.  For a bounded Lipschitz domain $D$, the symbol $( \cdot , \cdot )_D$ denotes the $L^2$-inner product on $D$, $\|\cdot\|_{0,D}$ denotes the $L^2$-norm, and $|\cdot|_{s,D}$ is the $H^s(D)$-seminorm. For all integer $k\ge 0$, $\mathbb{P}_k(D)$ is the set of polynomials of degree $\le k$ on $D$.

The jumps and averages for scalar and vector-valued functions ($v,\boldsymbol \tau $, respectively) on an edge $e$ common to two elements $K_1,K_2$ are defined by
\[ [\![ v ]\!] = v_1\boldsymbol{n}_1 + v_2\boldsymbol{n}_2,\quad \{\!\!\{v\}\!\!\} = \frac{v_1 + v_2}{2},\]
\[ [\![\boldsymbol{\tau }]\!] = \boldsymbol{\tau }_1 \cdot \boldsymbol{n}_1 + \boldsymbol{\tau }_2 \cdot \boldsymbol{n}_2,\quad\{\!\!\{\boldsymbol{\tau }\}\!\!\} = \frac{\boldsymbol{\tau }_1 + \boldsymbol\tau_2}{2},\]
where $\boldsymbol{n}_1,\boldsymbol{n}_2$ are the unit outward normals to $K_1,K_2$, respectively. On a boundary edge or face, $[\![ v ]\!] = v\boldsymbol{n}$ and $\{\!\!\{ \boldsymbol\tau \}\!\!\} = \boldsymbol\tau$.
 Moreover, for any two quantities $a$ and $b$, ``$a\lesssim b$" indicates ``$a\le C b$" with the hidden constant $C$ independent of the mesh size $h_K$, and ``$a\eqsim b$" abbreviates ``$a\lesssim b\lesssim a$".

\section{Radiative transfer equation}\label{sec:RTE}

The steady-state monoenergetic version of the radiative transfer equation is expressed as (cf. \cite{Han-Huang-Eichholz-2010,Atkinson-Han-2012})
\begin{equation}\label{RTE2}
\boldsymbol\omega  \cdot \nabla u(\boldsymbol x,\boldsymbol\omega ) + \sigma_t(\boldsymbol x)u(\boldsymbol x,\boldsymbol\omega ) = \sigma_s(\boldsymbol x)(Su)(\boldsymbol x,\boldsymbol\omega ) + f(\boldsymbol x,\boldsymbol\omega ),\quad \boldsymbol x\in D,\boldsymbol\omega \in S^2 .
\end{equation}
Here, $D$ is a domain in $\mathbb R^3$ and $S^2$ denotes the unit sphere in $\mathbb R^3$,
$u(\boldsymbol x,\boldsymbol\omega )$ is a function of three space variables $\boldsymbol x$ and two angular variables $\boldsymbol\omega$, $\sigma_t = \sigma_a + \sigma_s$ with $\sigma_a$ being the macroscopic absorption cross section, and $\sigma_s$ the macroscopic scattering cross section, and $f$ is a source function in $D$.
We impose an inflow boundary value condition
\begin{equation}\label{inflowbd}
u(\boldsymbol x,\boldsymbol\omega)=\alpha(\boldsymbol x,\boldsymbol\omega),\quad (\boldsymbol x,\boldsymbol\omega) \in {\Gamma_ - },
\end{equation}
where $\Gamma_ - $ is defined by
\begin{equation}\label{RTEbound}
\Gamma_ -  = \{ (\boldsymbol x,\boldsymbol\omega ): \boldsymbol x \in \partial D,~\boldsymbol\omega  \in S^2 ,\quad \boldsymbol n(\boldsymbol x) \cdot \boldsymbol\omega  < 0 \}.
\end{equation}
The symbol $S$ on the right-hand side of \eqref{RTE2} is an integral operator defined by
\begin{equation}\label{intop}
(Su)(\boldsymbol x,\boldsymbol\omega ) = \int_{S^2}  g(\boldsymbol x,\boldsymbol\omega  \cdot \hat{\boldsymbol\omega} )u(\boldsymbol x,\hat{\boldsymbol\omega} ) {\rm d}\sigma (\hat{\boldsymbol\omega} )
\end{equation}
with $g$ being a nonnegative normalized phase function
\[\int_{S^2} g(\boldsymbol x, \boldsymbol\omega  \cdot \hat{\boldsymbol\omega }) {\rm d}\sigma (\hat{\boldsymbol\omega }) = 1,\quad \boldsymbol x\in D, \boldsymbol\omega \in S^2.\]
In most applications, the function $g$ is assumed to be independent of $\boldsymbol x$. One well-known example considered in this paper is the Henyey-Greenstein phase function
 \begin{equation}\label{Heny}
g(t) = \frac{1 - \eta ^2}{4\pi (1 + \eta ^2 - 2 \eta t)^{3/2}},~~t \in [ - 1,1],
 \end{equation}
where the parameter $\eta \in (-1,1)$ is the anisotropy factor for the scattering medium which measures the strength of forward peakedness of the phase function. Note that $\eta =0$ for isotropic scattering, $\eta >0$ for forward scattering, and $\eta <0$ for backward scattering.

We assume that
\begin{itemize}
  \item $\sigma_t,\sigma_s \in L^\infty (D)$, $\sigma_s \ge 0$ a.e. in $D$, $\sigma_a = \sigma_t - \sigma_s \ge c_0$ in $D$ for a constant $c_0>0$.
  \item $f(\boldsymbol x,\boldsymbol\omega)\in L^2(D\times S^2)$ and is a continuous function with respect to $\boldsymbol\omega \in S^2$.
\end{itemize}
Under these assumptions, the problem \eqref{RTE2}-\eqref{inflowbd} has a unique solution $u \in H_2^1(D \times S^2)$ (cf. \cite{Han-Huang-Eichholz-2010}), where
\[H_2^1(D \times S^2): = \{ v \in L^2(D \times S^2): \boldsymbol\omega \cdot \nabla v \in L^2(D \times S^2)\}.\]

\section{The sparse grid discrete-ordinate DG method for the RTE}\label{sec:spdodg}
In this section, we first recall the construction of sparse discontinuous finite element spaces; One can refer to \cite{Wang-Tang-Guo-2016,Alpert-1993,Alpert-Beylkin-Gines-2002} and the references therein for details. Then, we will present in detail the sparse grid discrete-ordinate DG method for the RTE.

\subsection{Construction of sparse DG spaces}\label{sec:sdg}
Let $\Omega  = [0,1]$ and partition it into $2^n$ cells with uniform cell size $h = 2^{ - n}$. The resulting $n$-th level grid is denoted by ${\Omega_n}$ and the $j$-th cell is given by
\[I_n^j = (2^{ - n}j,2^{ - n}(j + 1)],\quad j = 0,1, \cdots ,2^n - 1.\]
We define
\[V_n^k = \{ v: v |_{I_n^j} \in \mathbb P_k(I_n^j), \quad j = 0,1, \cdots ,2^n - 1\}\]
to be the piecewise polynomial space on ${\Omega_n}$. One can check that there exists the nested structure for different values of $n$: $V_0^k \subset V_1^k \subset  \cdots  \subset V_n^k \subset  \cdots$.
Denote $W_n^k$ to be the orthogonal complement of $V_{n-1}^k$ in $V_n^k$ with respect to the $L^2(\Omega )$ inner product, i.e.,
\[V_{n - 1}^k \oplus W_n^k = V_n^k,\quad W_n^k \bot V_{n - 1}^k, \quad n \ge 1,\]
where for simplicity set $W_0^k = V_0^k$. We then obtain an orthogonal decomposition of the DG space
\[V_N^k = \mathop  \bigoplus \limits_{0 \le n \le N} W_n^k.\]

We proceed to review the construction in multi-dimensions. For $\Omega  = [0,1]^d$, let $h_m = 2^{ - n_m}$ be the step size along $x_m$-direction. For simplicity, we use the notations of multi-indices in the following. Let $\boldsymbol n = (n_1,n_2, \cdots ,n_d)$. Then the cell size can be denoted by
\[h_{\boldsymbol n} = (2^{ - n_1},2^{ - n_2}, \cdots ,2^{ - n_d}) = 2^{ - \boldsymbol n}\]
and the associated grid is written by $\Omega_{\boldsymbol n}$ whose $\boldsymbol j$-th cell is given by
\[I_{\boldsymbol n}^{\boldsymbol j} = I_{n_1}^{j_1} \times I_{n_2}^{j_2} \times  \cdots  \times I_{n_d}^{j_d},\quad \boldsymbol j = (j_1,j_2, \cdots ,j_d),\]
where
\[I_{n_m}^{j_m} = (2^{ - n_m}j_m,2^{ - n_m}(j_m + 1)],\quad j_m = 0,1, \cdots ,2^{n_m} - 1\]
is the element along $x_m$-axis. With multi-indices notation we have $\boldsymbol 0\le \boldsymbol j \le2^{\boldsymbol n}-\boldsymbol 1$.  Introduce a tensor-product piecewise polynomial space as
\[\boldsymbol V_{\boldsymbol n}^k = \{ \boldsymbol v: \boldsymbol v(\boldsymbol x) \in \mathbb Q_k(I_{\boldsymbol n}^{\boldsymbol j}), \quad \boldsymbol 0 \le \boldsymbol j \le 2^{\boldsymbol n} - \boldsymbol 1 \},\]
where $\mathbb Q_k(I_{\boldsymbol n}^{\boldsymbol j})$ consists of polynomials of degree up to $k$ in each dimension on cell $I_{\boldsymbol n}^{\boldsymbol j}$. If we use an equal refinement of size $h: = h_N = 2^{-N}$ in each coordinate
direction, the grid and space will be denoted by $\Omega_N$ and $\boldsymbol V_N^k$, respectively. With the usual convention, we also use $\mathcal{T}_h$ and $V_h^k$ instead.

It is obvious that
\[\boldsymbol V_{\boldsymbol n}^k = V_{n_1}^k \times V_{n_2}^k \times  \cdots  \times V_{n_d}^k.\]
We similarly define the tensor-product multiwavelet space as
\[\boldsymbol W_{\boldsymbol n}^k = W_{n_1}^k \times W_{n_2}^k \times \cdots \times W_{n_d}^k.\]
Observing the fact that
\[V_{n_m}^k = \mathop  \bigoplus \limits_{0 \le j_m \le n_m} W_{j_m}^k,\]
we have the following expansion
\[\boldsymbol V_{\boldsymbol n}^k = \mathop  \bigoplus \limits_{\boldsymbol 0 \le \boldsymbol j \le \boldsymbol n} \boldsymbol W_{\boldsymbol j}^k,\quad ~
\boldsymbol V_N^k = \mathop  \bigoplus \limits_{| \boldsymbol j |_\infty  \le N} \boldsymbol W_{\boldsymbol j}^k.\]
The sparse finite element approximation space on $\Omega_N$ is defined by the following truncated space
\[
\widehat{\boldsymbol V}_N^k: = \mathop  \bigoplus \limits_{| \boldsymbol n |_1 \le N} \boldsymbol W_{\boldsymbol n}^k,\quad | \boldsymbol n |_1 = n_1 + n_2 +  \cdots  + n_d.
\]
The number of degrees of freedom of sparse DG space is $\mathcal{O}( h^{ - 1}| \log_2h |^{d - 1} )$ with $h = 2^{ - N}$,
which is significantly less than that of DG space with exponential dependence on $d$.

\subsection{The sparse grid discrete-ordinate DG method}
For any continuous function $F(\boldsymbol\omega)$ defined on the unit sphere $S^2$, we write the numerical quadrature to be used in the form
\begin{equation}\label{jiaoduli}
\int_{S^2} F(\boldsymbol\omega ) {\rm d}\sigma (\boldsymbol\omega ) \approx \sum\limits_{l = 1}^L {w_lF(\boldsymbol\omega ^l)} ,\quad \boldsymbol\omega ^l \in S^2 ,~1 \le l \le L.
\end{equation}
The integral operator $S$ is then approximated by
\begin{equation}\label{Suapp}
(Su)(\boldsymbol x,\boldsymbol\omega ) \approx (S_du)(\boldsymbol x,\boldsymbol\omega ) := \sum\limits_{l = 1}^L w_lg(\boldsymbol x,\boldsymbol\omega  \cdot \boldsymbol\omega ^l)u(\boldsymbol x,\boldsymbol\omega ^l) .
\end{equation}
Regarding the accuracy of the quadrature \eqref{jiaoduli}, we will write $n$ for the algebraic precision, i.e., the quadrature integrates exactly all spherical polynomials of total degree no more than $n$ and does not integrate exactly some spherical polynomial of total degree $n+1$. Then we have the following estimate (cf. \cite{Han-Huang-Eichholz-2010})
\begin{equation}\label{spest}
\Big| \int_{S^2}  F (\boldsymbol\omega){\rm d}\sigma (\boldsymbol\omega) - \sum\limits_{l = 1}^L w_l F( \boldsymbol\omega ^l ) \Big| \le c_sn^{ - s}\| F \|_{s,S^2 },\quad F \in H^s({S^2} ),\quad s > 1,
\end{equation}
where $c_s$ is a universal constant depending only on $s$.
Associated with the numerical quadrature, we further define
\begin{equation}\label{mx}
m(\boldsymbol x) = \mathop {\max }\limits_{1 \le i \le L} \sum\limits_{l = 1}^L w_lg(\boldsymbol x,\boldsymbol\omega ^l \cdot \boldsymbol\omega ^i)
\end{equation}
and make the following assumption (cf. \cite{Han-Huang-Eichholz-2010}):
\begin{equation}\label{assm}
 \sigma_t-m\sigma_s \ge c_0' ~\mbox{in $D$ for some constant $c_0'>0$}.
\end{equation}

Using the quadrature \eqref{Suapp}, we can discretize \eqref{RTE2} in angular direction to get
\begin{equation}\label{semibs}
\boldsymbol\omega ^l \cdot \nabla u^l + \sigma_tu^l = \sigma_s(\boldsymbol x)\sum\limits_{i = 1}^L w_ig(\boldsymbol x,\boldsymbol\omega ^l \cdot \boldsymbol\omega ^i)u^i  + f^l,\quad 1 \le l \le L
\end{equation}
with boundary value condition
\begin{equation}\label{seimibond}
u^l(\boldsymbol x) = \alpha^l(\boldsymbol x),\quad (\boldsymbol x,\boldsymbol\omega ^l)\in \Gamma_-,~~1 \le l \le L,
\end{equation}
where $u^l =u^l(\boldsymbol x)$ is the approximation to $u(\boldsymbol x,\boldsymbol\omega ^l)$.

The system \eqref{semibs} is a first-order hyperbolic problem in space, which will be further discretized by DG method. Let $\{ \mathcal{T}_h \}_{h > 0}$ be a regular family of triangulations of $D$. Assume that $\mathcal{E}_h$ consists of the set of all edges ($d=2$) or faces ($d=3$) in $\mathcal{T}_h$ and $\mathcal{E}_h^0$ the set of all interior edges or faces. By a direct manipulation, we obtain the following identity (cf. \cite{Brezzi-Marini-Suli-2004}):
\begin{lemma}\label{dgforweak}
 For $(\varphi,\boldsymbol\tau ) \in H^s(\mathcal{T}_h) \times [H^s(\mathcal{T}_h)]^d$, $s > 1/2$, there holds
\begin{equation}\label{dgformula}
 \sum\limits_{K \in \mathcal{T}_h} \int_{\partial K} \varphi\boldsymbol\tau  \cdot \boldsymbol n {\rm d}s  = \sum\limits_{e \in \mathcal{E}_h} \int_e \{\!\!\{\boldsymbol\tau  \}\!\!\} \cdot [\![ \varphi ]\!] {\rm d}s + \sum\limits_{e \in \mathcal{E}_h^0} \int_e \{ \!\!\{ \varphi \}\!\!\} \cdot [\![ \boldsymbol\tau ]\!] {\rm d}s.
\end{equation}
Further, if $u\in H^s(\omega _e)$ and $s>1/2$, then we have the following weak continuity
\[\int_e [\![ u ]\!]v {\rm d}s = 0,\quad v \in L^2(e),\quad e \in \mathcal{E}_h^0,\]
 where $\omega _e$ is the set of elements sharing $e$ as an edge ($d=2$) or faces ($d=3$).
\end{lemma}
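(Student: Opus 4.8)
The plan is to regard the identity \eqref{dgformula} as a purely algebraic regrouping of face integrals, and to deduce the weak continuity from the trace theorem; no genuine integration by parts is needed.

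First I would convert the element-wise sum on the left of \eqref{dgformula} into a sum over faces. Since every interior face $e\in\mathcal{E}_h^0$ is shared by exactly two elements $K_1,K_2$ and every boundary face belongs to a single element, regrouping gives
\[
\sum_{K\in\mathcal{T}_h}\int_{\partial K}\varphi\boldsymbol\tau\cdot\boldsymbol n\,{\rm d}s
=\sum_{e\in\mathcal{E}_h^0}\int_e\big(\varphi_1\boldsymbol\tau_1\cdot\boldsymbol n_1+\varphi_2\boldsymbol\tau_2\cdot\boldsymbol n_2\big)\,{\rm d}s
+\sum_{e\in\mathcal{E}_h\setminus\mathcal{E}_h^0}\int_e\varphi\boldsymbol\tau\cdot\boldsymbol n\,{\rm d}s.
\]
The assumption $s>1/2$ is used here precisely to guarantee, via the trace theorem, that the one-sided traces $\varphi_i$ and $\boldsymbol\tau_i$ belong to $L^2(e)$, so that every face integral is well defined.

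Next I would establish the pointwise identity on a single face. On an interior face, using the definitions of the jumps and averages together with the opposite-orientation relation $\boldsymbol n_2=-\boldsymbol n_1$, I would expand $\{\!\!\{\boldsymbol\tau\}\!\!\}\cdot[\![\varphi]\!]+\{\!\!\{\varphi\}\!\!\}[\![\boldsymbol\tau]\!]$ and collect terms; the four mixed terms carrying $\varphi_1\boldsymbol\tau_2$ and $\varphi_2\boldsymbol\tau_1$ cancel in pairs because $\boldsymbol n_1+\boldsymbol n_2=\boldsymbol 0$, leaving exactly $\varphi_1\boldsymbol\tau_1\cdot\boldsymbol n_1+\varphi_2\boldsymbol\tau_2\cdot\boldsymbol n_2$. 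On a boundary face the conventions $[\![\varphi]\!]=\varphi\boldsymbol n$ and $\{\!\!\{\boldsymbol\tau\}\!\!\}=\boldsymbol\tau$ collapse the right-hand side to $\varphi\boldsymbol\tau\cdot\boldsymbol n$, matching the single-element contribution. Substituting these two pointwise identities into the face-sum representation yields \eqref{dgformula}.

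For the weak continuity I would argue as follows. Since $u\in H^s(\omega_e)$ is a single function on the patch $\omega_e=K_1\cup K_2$, the trace theorem (again requiring $s>1/2$) shows that its trace on $e$ from $K_1$ coincides with that from $K_2$, so $u_1=u_2$ in $L^2(e)$. Hence $[\![u]\!]=u_1\boldsymbol n_1+u_2\boldsymbol n_2=u_1(\boldsymbol n_1+\boldsymbol n_2)=\boldsymbol 0$, and therefore $\int_e[\![u]\!]v\,{\rm d}s=\boldsymbol 0$ for every $v\in L^2(e)$. The computation is elementary throughout; the only points demanding care are the sign bookkeeping in the cross-term cancellation and the consistent use of $s>1/2$ so that all one-sided traces genuinely live in $L^2(e)$.
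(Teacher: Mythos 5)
Your proposal is correct and coincides with the paper's own treatment: the paper gives no written argument at all, invoking only ``a direct manipulation'' with a citation to Brezzi--Marini--S\"uli, and your face-by-face regrouping with the cross-term cancellation from $\boldsymbol n_2=-\boldsymbol n_1$, plus the trace-coincidence argument for $u\in H^s(\omega_e)$, $s>1/2$, is exactly that manipulation written out in full. No gaps; the only point worth keeping explicit is the density-of-smooth-functions justification that the two one-sided traces of a function in $H^s(\omega_e)$ agree on $e$, which you correctly attribute to the trace theorem.
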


We define a discontinuous finite element space by
\begin{equation}\label{DGsingle}
V_h = \Big\{ v \in L^2(D ): v|_K \in \mathbb P_k(K),~~K \in \mathcal{T}_h \Big\},
\end{equation}
where $\mathbb P_k(K)$ denotes the set of all polynomials on $K$ with degree $\le k$. Multiplying \eqref{semibs} by any $v_h \in V_h$, we obtain from the integration by parts that
\begin{align*}
  \sum\limits_{K \in \mathcal{T}_h} \Big[ \int_K (  - u^l(\boldsymbol\omega ^l \cdot \nabla v_h) + \sigma_tu^lv_h) {\rm d}x + \int_{\partial K} (\boldsymbol\omega ^l \cdot \boldsymbol n)u^lv_h {\rm d}s \Big]   \\
  \quad \quad  = \int_D \sigma_s\sum\limits_{i = 1}^L w_ig( \cdot ,\boldsymbol\omega ^l \cdot \boldsymbol\omega ^i)u^i v_h {\rm d}x + \int_D f^lv_h {\rm d}x,~~1 \le l \le L.
\end{align*}
 Taking $\boldsymbol\tau = \boldsymbol\omega^lu^l$ and $\varphi  = v_h$ in \eqref{dgformula}, we immediately obtain the following system
\[a_h^{(l)}(u^l,v_h) + b_h^{(l)}(u^l,v_h) = (f^l,v_h) + \langle \alpha ^l,v_h\rangle ^{(l)},\quad v_h \in V_h,\]
where
\begin{align}\label{ahl}
a_h^{(l)}(u^l,v_h) &= \sum\limits_{K \in \mathcal{T}_h} \int_K (  - u^l(\boldsymbol\omega ^l \cdot \nabla v_h) + \sigma_tu^lv_h ) {\rm d}x  \nonumber \\
 & \quad - \int_D \sigma_s\sum\limits_{i = 1}^L w_ig( \cdot ,\boldsymbol\omega ^l \cdot \boldsymbol\omega ^i)u^i v_h {\rm d}x,
\end{align}
\begin{equation}\label{bhl}
b_h^{(l)}(u^l,v_h) = \sum\limits_{e \not\subset \Gamma_ - } \int_e \{\!\! \{ \boldsymbol\omega ^lu^l \} \!\!\} \cdot [\![ v_h
 ]\!]  {\rm d}s,
\end{equation}
\begin{equation}\label{flvh}
(f^l,v_h) = \int_D f^lv_h {\rm d}x,\quad \langle \alpha ^l,v_h\rangle ^{(l)} =  - \sum\limits_{e \subset \Gamma_ - } \int_e \boldsymbol\omega ^l \cdot \boldsymbol n\alpha ^lv_h  {\rm d}s.
\end{equation}
Define $\boldsymbol V_h = ( V_h )^L$ and write a generic element as $\boldsymbol v_h: = \{ v_h^l \}_{l = 1}^L$. The global formulation can be expressed as
\[\sum\limits_{l = 1}^L w_l( a_h^{(l)}(u^l,v_h^l) + b_h^{(l)}(u^l,v_h^l) )  = \sum\limits_{l = 1}^L w_l( (f^l,v_h^l) + \langle \alpha ^l,v_h^l\rangle ^{(l)} ) .\]
Then the discrete-ordinate DG method is: Find $\boldsymbol u_h: = \{u_h^l\} \in \boldsymbol V_h$ such that
\begin{equation}\label{unstabilized}
a_h(\boldsymbol u_h,\boldsymbol v_h) = F(\boldsymbol v_h),\quad \boldsymbol v_h \in \boldsymbol V_h,
\end{equation}
where
\[a_h(\boldsymbol u_h,\boldsymbol v_h) = \sum\limits_{l = 1}^L w_l( a_h^{(l)}(u_h^l,v_h^l) + b_h^{(l)}(u_h^l,v_h^l) ) ,\]
\[F(\boldsymbol v_h) = \sum\limits_{l = 1}^L w_l( (f^l,v_h^l) + \langle \alpha ^l,v_h^l\rangle^{(l)} ) .\]

It is preferable to add some stabilization terms in the DG scheme to penalize the jump of the solution across interior edges or faces of the triangulation. One approach introduced in \cite{Brezzi-Marini-Suli-2004} is to replace the average $\{\!\! \{ \boldsymbol\omega ^lu^l \} \!\!\}$ in \eqref{bhl} by $\{\!\! \{ \boldsymbol\omega^l u_h^l \} \!\!\} + c_e^l[\![ u_h^l ]\!]$, where $c_e^l$ is a nonnegative function over $e$ satisfying $c_e^l = \theta_0| \boldsymbol\omega^l \cdot \boldsymbol n |$ with $\theta_0$ a constant independent of $e$ and $h$.
 The stabilized discrete-ordinate DG method is to find $\boldsymbol u_h: = \{ u_h^l \} \in \boldsymbol V_h$ such that
\begin{equation}\label{stabilized}
a_h^s(\boldsymbol u_h,\boldsymbol v_h) = F(\boldsymbol v_h),\quad \boldsymbol v_h \in \boldsymbol V_h,
\end{equation}
where
\begin{equation}\label{ashswor}
a_h^s(\boldsymbol u_h,\boldsymbol v_h) = \sum\limits_{l = 1}^L w_l( a_h^{(l)}(u_h^l,v_h^l) + b_{hs}^{(l)}(u_h^l,v_h^l) )
\end{equation}
and
\begin{align}\label{bhs}
  b_{hs}^{(l)}(u_h^l,v_h^l)
 & = b_h^{(l)}(u_h^l,v_h^l) + \sum\limits_{e \in \mathcal{E}_h^0} \int_e c_e^l[\![ u_h^l ]\!] \cdot [\![ v_h^l ]\!]  {\rm d}s \nonumber \\
 & = \sum\limits_{e \not\subset \Gamma_ - } \int_e \{\!\!\{ \boldsymbol\omega ^lu_h^l \}\!\!\} \cdot [\![ v_h^l
 ]\!]  {\rm d}s + \sum\limits_{e \in \mathcal{E}_h^0} \int_e c_e^l[\![ u_h^l ]\!] \cdot [\![ v_h^l ]\!]  {\rm d}s.
\end{align}

\begin{remark}\label{rem:Vh}
The sparse grid discrete-ordinate DG method is obtained by replacing the DG space $V_h$ in \eqref{DGsingle} with the sparse DG space $\widehat{V}_h^k:=\widehat{\boldsymbol V}_N^k \subset V_h$.
\end{remark}

\section{Error analysis}

\subsection{Error estimate of the sparse projection operator}\label{subproj}

We define the broken $H^s$ Sobolev norm on $\Omega_N$ by
\[\| v \|_{H^s(\Omega_N)}^2 = \sum\limits_{\boldsymbol 0 \le \boldsymbol j \le 2^{\boldsymbol N - \boldsymbol 1} - \boldsymbol 1} \| v \|_{H^s(I_{\boldsymbol N}^{\boldsymbol j})}^2 .\]
For any nonnegative integer $m$ and the multi-index $\alpha  = \{ i_1,i_2, \cdots ,i_r \} \subset \{ 1,2, \cdots ,d \}$, define
\[| v |_{H^{m,\alpha }(\Omega )} = \Big\| \Big( \frac{\partial ^m}{\partial x_{i_1}^m} \cdots \frac{\partial ^m}{\partial x_{i_r}^m} \Big)v \Big\|_{L^2(\Omega )}\]
and
\[| v |_{\mathcal{H}^{q + 1}(\Omega )} = \mathop {\max }\limits_{1 \le r \le d} \Big( \mathop {\max }\limits_{\alpha  \in \{ 1,2, \cdots ,d \},| \alpha  | = r} | v |_{H^{q + 1,\alpha }(\Omega )} \Big),\]
which is the norm for the mixed derivative of $v$ of at most degree $q + 1$ in each direction.

In the following, we denote by $\boldsymbol P$ the sparse projection operator to be the $L^2$ projection onto $\widehat{\boldsymbol V}_N^k$.
\begin{lemma}\label{lem:sparseProj}
  Let $\boldsymbol P$ be the sparse projector, $k \ge 1$, $N \ge 1$ and $d \ge 2$. Then for $v \in \mathcal{H}^{p + 1}(\Omega )$ there hold
  \[| \boldsymbol Pv - v |_{L^2(\Omega_N)} \lesssim | \log_2h |^dh^{k + 1}| v |_{\mathcal{H}^{k + 1}(\Omega )},\]
  \[| \boldsymbol Pv - v |_{H^1(\Omega_N)} \lesssim h^k| v |_{\mathcal{H}^{k + 1}(\Omega )},\]
  and
  \[\Big( \sum\limits_{K \in \mathcal{T}_h} \| \boldsymbol Pv - v \|_{0,\partial K}^2 \Big)^{1/2}
  \lesssim | \log_2h |^dh^{k + 1/2}|v |_{\mathcal{H}^{k + 1}(\Omega )} .  \]
\end{lemma}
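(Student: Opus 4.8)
The plan is to use the $L^2$-orthogonal multiwavelet splitting $L^2(\Omega)=\bigoplus_{\boldsymbol n\ge\boldsymbol 0}\boldsymbol W_{\boldsymbol n}^k$ to reduce all three estimates to single-level (hierarchical surplus) bounds followed by a summation over levels. Write $\boldsymbol P_{\boldsymbol n}$ for the $L^2$-projection onto $\boldsymbol W_{\boldsymbol n}^k$; since these spaces form a complete orthogonal system, $v=\sum_{\boldsymbol n\ge\boldsymbol 0}\boldsymbol P_{\boldsymbol n}v$ in $L^2(\Omega)$. Because $\widehat{\boldsymbol V}_N^k=\bigoplus_{|\boldsymbol n|_1\le N}\boldsymbol W_{\boldsymbol n}^k$ is an orthogonal direct sum, the $L^2$-projection onto it is $\boldsymbol P=\sum_{|\boldsymbol n|_1\le N}\boldsymbol P_{\boldsymbol n}$, so that $v-\boldsymbol Pv=\sum_{|\boldsymbol n|_1>N}\boldsymbol P_{\boldsymbol n}v$ with mutually orthogonal terms. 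In particular $\|v-\boldsymbol Pv\|_{0,\Omega}^2=\sum_{|\boldsymbol n|_1>N}\|\boldsymbol P_{\boldsymbol n}v\|_{0,\Omega}^2$, and the whole lemma follows once the surpluses $\boldsymbol P_{\boldsymbol n}v$ are controlled by the mixed-derivative seminorm.

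The substantive input is a per-level estimate of $\boldsymbol P_{\boldsymbol n}v$. Building on the one-dimensional multiwavelet approximation theory of \cite{Alpert-1993,Alpert-Beylkin-Gines-2002} and its tensorization in \cite{Wang-Tang-Guo-2016}, I would first establish the $L^2$ surplus bound
\[\|\boldsymbol P_{\boldsymbol n}v\|_{0,\Omega}\lesssim 2^{-(k+1)|\boldsymbol n|_1}|v|_{\mathcal H^{k+1}(\Omega)},\]
where the exponent is $(k+1)\sum_{m:\,n_m\ge1}n_m=(k+1)|\boldsymbol n|_1$ and the relevant quantity is the mixed derivative of order $k+1$ in exactly the refined directions $\{m:n_m\ge1\}$, which is dominated by $|v|_{\mathcal H^{k+1}(\Omega)}$. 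For the gradient I would instead use the sharper anisotropic estimate in which differentiating along $x_m$ costs only one order of approximation in that single direction,
\[\|\partial_{x_m}\boldsymbol P_{\boldsymbol n}v\|_{0,\Omega}\lesssim 2^{-k\,n_m-(k+1)\sum_{i\ne m}n_i}|v|_{\mathcal H^{k+1}(\Omega)}.\]

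With these in hand the summations are routine geometric and combinatorial arguments. For the $L^2$ bound, grouping by level $\ell=|\boldsymbol n|_1$ and using that there are $\binom{\ell+d-1}{d-1}=\mathcal O(\ell^{d-1})$ indices on level $\ell$ gives $\|v-\boldsymbol Pv\|_{0,\Omega}^2\lesssim|v|_{\mathcal H^{k+1}(\Omega)}^2\sum_{\ell>N}\ell^{d-1}2^{-2(k+1)\ell}$, a convergent series whose tail is $\lesssim N^{d-1}2^{-2(k+1)N}$; since $N=|\log_2h|$ and $2^{-N}=h$ this yields the first bound, the stated power $|\log_2h|^d$ serving as a conservative bound for the level count. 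For the broken $H^1$ seminorm I would sum the squared directional estimates over $|\boldsymbol n|_1>N$; for each fixed differentiated direction $m$ the constraint together with the stronger decay of order $k+1$ in the remaining directions confines the dominant contribution to $n_m\approx N$ and forces the off-direction geometric series to be $\mathcal O(1)$, so the total is $\lesssim 2^{-2kN}=h^{2k}$ with no logarithmic loss, giving the second bound. Finally, for the face term I would apply the scaled trace inequality $\|w\|_{0,\partial K}^2\lesssim h_K^{-1}\|w\|_{0,K}^2+h_K|w|_{1,K}^2$ on each $K\in\mathcal T_h$ (all of size $h$), sum over $K$, and insert the first two estimates; the $L^2$ contribution $h^{-1}(|\log_2h|^dh^{k+1})^2$ dominates and produces $|\log_2h|^{2d}h^{2k+1}$, whence the third bound after taking square roots.

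The main obstacle is the directional surplus estimate underlying the $H^1$ bound. A naive application of an inverse inequality, $|\boldsymbol P_{\boldsymbol n}v|_{1,\Omega}\lesssim 2^{|\boldsymbol n|_\infty}\|\boldsymbol P_{\boldsymbol n}v\|_{0,\Omega}$, loses a spurious factor of order $|\log_2h|^{(d-1)/2}$ upon summation, whereas the stated $H^1$ bound carries no logarithm. Obtaining the log-free rate therefore requires the refined anisotropic estimate—full order $k+1$ in the undifferentiated directions and only one order less in the differentiated one—which in turn rests on carefully matching, for each multi-index $\boldsymbol n$, the active refinement directions with the corresponding component of the mixed-derivative seminorm $|v|_{\mathcal H^{k+1}(\Omega)}$. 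The remaining steps (orthogonality of the splitting, geometric summation over levels, and the scaled trace inequality) are standard.
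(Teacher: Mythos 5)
Your proposal is correct in substance, but it takes a more self-contained route than the paper. The paper's own proof is very short: it simply \emph{quotes} the two projection estimates
\[
| \boldsymbol Pv - v |_{H^s(\Omega_N)} \lesssim
\begin{cases}
N^d 2^{-N(q+1)} | v |_{\mathcal H^{q+1}(\Omega)}, & s=0,\\
2^{-Nq} | v |_{\mathcal H^{q+1}(\Omega)}, & s=1,
\end{cases}
\]
from \cite{Schwab-Suli-Todor-2008,Wang-Tang-Guo-2016,Guo-Cheng-2016}, substitutes $h=2^{-N}$, and then obtains the third bound exactly as you do, via the elementwise trace inequality $\|\phi\|_{0,\partial K}^2 \lesssim h_K^{-1}\|\phi\|_{0,K}^2 + h_K|\phi|_{1,K}^2$. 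What you do differently is reprove those two cited estimates from scratch: the $L^2$-orthogonal splitting $v-\boldsymbol Pv=\sum_{|\boldsymbol n|_1>N}\boldsymbol P_{\boldsymbol n}v$, the per-level surplus bound $\|\boldsymbol P_{\boldsymbol n}v\|_{0,\Omega}\lesssim 2^{-(k+1)|\boldsymbol n|_1}|v|_{\mathcal H^{k+1}(\Omega)}$, the anisotropic directional bound (which indeed follows from the directional inverse inequality $\|\partial_{x_m}w\|_0\lesssim 2^{n_m}\|w\|_0$ on $\boldsymbol W_{\boldsymbol n}^k$ applied to the $L^2$ surplus bound), and the level-by-level summation. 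This is essentially the argument inside the cited references, so your write-up buys self-containedness (and in fact a slightly sharper log power, $N^{(d-1)/2}$ rather than $N^d$, in the $L^2$ bound), at the cost of having to establish the surplus estimates that the paper treats as known. One technical slip to fix: for the $H^1$ bound you propose to ``sum the squared directional estimates,'' but the surpluses $\boldsymbol P_{\boldsymbol n}v$ are orthogonal only in $L^2$, not in the broken $H^1$ seminorm, so Pythagoras is not available there; you must instead sum the directional estimates by the triangle inequality, which by the same splitting of the index set ($n_m>N$ versus $n_m\le N$, $\sum_{i\ne m}n_i>N-n_m$) still yields $\lesssim 2^{-kN}=h^k$ with no logarithmic factor, exactly as your dominant-contribution heuristic predicts.
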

\begin{proof}
  It follows from \cite{Schwab-Suli-Todor-2008,Wang-Tang-Guo-2016,Guo-Cheng-2016} that for any $v \in \mathcal{H}^{p + 1}(\Omega )$ and $1\le q\le \min\{p,k\}$, there holds
  \[
  | \boldsymbol Pv - v |_{H^s(\Omega_N)}   \lesssim
  \begin{cases}
  N^d2^{ - N(q + 1)}| v |_{\mathcal{H}^{q + 1}(\Omega )},\quad & s = 0, \\
  2^{ - Nq}| v |_{\mathcal{H}^{q + 1}(\Omega )},\quad & s = 1.
\end{cases}
\]
Noting that $h =h_N= 2^{ - N}$, we have
\[| \boldsymbol Pv - v |_{L^2(\Omega_N)} \lesssim | \log_2h |^dh^{k + 1}| v |_{\mathcal{H}^{k + 1}(\Omega )}\]
and
\[| \boldsymbol Pv - v |_{H^1(\Omega_N)} \lesssim h^k| v |_{\mathcal{H}^{k + 1}(\Omega )}.\]
Recalling the trace inequality (cf. \cite{Brenner2008})
\begin{equation}\label{traceinq}
\| \phi  \|_{0,\partial K}^2 \lesssim h_K^{ - 1}\| \phi  \|_{0,K}^2 + h_K| \phi  |_{1,K}^2,
\end{equation}
where $K\in \mathcal{T}_h$ with diameter $h_K$, we then have
\begin{align*}
  \Big( \sum\limits_{K \in \mathcal{T}_h} \| \boldsymbol Pv - v \|_{0,\partial K}^2 \Big)^{1/2}
  &\lesssim \Big( \sum\limits_{K \in \mathcal{T}_h} h_K^{ - 1}\| \boldsymbol Pv - v \|_{0,K}^2 + h_K| \boldsymbol Pv - v |_{1,K}^2 \Big)^{1/2} \\
  & \lesssim ( | \log_2h |^{2d} + 1 )^{1/2}h^{k + 1/2}| v |_{\mathcal{H}^{k + 1}(\Omega )}  \\
  & \lesssim | \log_2h |^dh^{k + 1/2}|v |_{\mathcal{H}^{k + 1}(\Omega )} .
\end{align*}
This completes the proof.
\end{proof}

\subsection{Error analysis of the sparse grid discrete-ordinate DG method}

Using the similar arguments in \cite{Brezzi-Marini-Suli-2004,Han-Huang-Eichholz-2010}, one can deduce the following stability result whose proof is omitted for simplicity.

\begin{lemma}\label{lem:stability}
Let
\[|\!|\!|\boldsymbol v_h|\!|\!| = \Big[ \sum\limits_{l = 1}^L w_l\Big( \sum\limits_{e \in \mathcal{E}_h} \int_e c_e^l  | [\![ v_h^l]\!] |^2{\rm d}s + \int_D (v_h^l)^2 {\rm d}x \Big)  \Big]^{1/2}.\]
Under the assumption \eqref{assm}, there holds
\[a_h^s(\boldsymbol v_h,\boldsymbol v_h) \gtrsim |\!|\!|\boldsymbol v_h|\!|\!|^2,\quad \boldsymbol v_h \in \boldsymbol V_h.\]
\end{lemma}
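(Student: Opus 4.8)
The plan is to test the stabilized form against $\boldsymbol v_h$ itself and reduce it, direction by direction, to manifestly nonnegative contributions plus the scattering coupling, which is then absorbed using \eqref{assm}. Fix a direction index $l$ and consider the diagonal energy $a_h^{(l)}(v_h^l,v_h^l)+b_{hs}^{(l)}(v_h^l,v_h^l)$. Since $\boldsymbol\omega^l$ is a constant vector, integration by parts on each $K\in\mathcal T_h$ gives $-\int_K v_h^l(\boldsymbol\omega^l\cdot\nabla v_h^l)\,dx=-\tfrac12\int_{\partial K}(\boldsymbol\omega^l\cdot\boldsymbol n)(v_h^l)^2\,ds$, so the convective part of \eqref{ahl} turns into a sum of element-boundary integrals. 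First I would rewrite this sum over the edge set using the same average/jump bookkeeping behind Lemma \ref{dgforweak}: on an interior edge the two one-sided traces combine (via $\boldsymbol n_2=-\boldsymbol n_1$ and $a^2-b^2=2\{\!\!\{\cdot\}\!\!\}[\![\cdot]\!]$-type identities) into $2\{\!\!\{\boldsymbol\omega^l v_h^l\}\!\!\}\cdot[\![v_h^l]\!]$, while on $\partial D$ the contribution is simply $(\boldsymbol\omega^l\cdot\boldsymbol n)(v_h^l)^2$.

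Next I would add the average-flux term of $b_{hs}^{(l)}$ in \eqref{bhs}, namely $\sum_{e\not\subset\Gamma_-}\int_e\{\!\!\{\boldsymbol\omega^l v_h^l\}\!\!\}\cdot[\![v_h^l]\!]\,ds$. Its interior-edge part cancels exactly the $-\sum_{e\in\mathcal E_h^0}\int_e\{\!\!\{\boldsymbol\omega^l v_h^l\}\!\!\}\cdot[\![v_h^l]\!]$ produced above, while its outflow-boundary part combines with the leftover $-\tfrac12\sum_{e\subset\partial D}\int_e(\boldsymbol\omega^l\cdot\boldsymbol n)(v_h^l)^2$; splitting $\partial D$ according to the sign of $\boldsymbol\omega^l\cdot\boldsymbol n$ then leaves only the nonnegative boundary term $\tfrac12\sum_{e\subset\partial D}\int_e|\boldsymbol\omega^l\cdot\boldsymbol n|(v_h^l)^2\,ds$. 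Hence, for each $l$,
\begin{align*}
a_h^{(l)}(v_h^l,v_h^l)+b_{hs}^{(l)}(v_h^l,v_h^l)
&=\int_D\sigma_t(v_h^l)^2\,dx-\int_D\sigma_s\sum_{i=1}^L w_i g(\cdot,\boldsymbol\omega^l\cdot\boldsymbol\omega^i)v_h^i v_h^l\,dx\\
&\quad+\frac12\sum_{e\subset\partial D}\int_e|\boldsymbol\omega^l\cdot\boldsymbol n|(v_h^l)^2\,ds+\sum_{e\in\mathcal E_h^0}\int_e c_e^l|[\![v_h^l]\!]|^2\,ds.
\end{align*}

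Multiplying by $w_l$ and summing over $l$, the last two groups are nonnegative; moreover, since $c_e^l=\theta_0|\boldsymbol\omega^l\cdot\boldsymbol n|$ and $|[\![v_h^l]\!]|^2=(v_h^l)^2$ on boundary edges, they jointly control, up to a factor depending only on $\theta_0$, the edge part $\sum_l w_l\sum_{e\in\mathcal E_h}\int_e c_e^l|[\![v_h^l]\!]|^2$ of $|\!|\!|\boldsymbol v_h|\!|\!|^2$. The decisive step is then to bound the volume contribution from below. Writing $G_{li}=g(\boldsymbol x,\boldsymbol\omega^l\cdot\boldsymbol\omega^i)\ge0$ and using the symmetry $G_{li}=G_{il}$ together with Young's inequality, I would estimate
\[
\Big|\sum_{l,i}w_l w_i G_{li}v_h^i v_h^l\Big|\le\sum_{l=1}^L w_l(v_h^l)^2\sum_{i=1}^L w_i G_{li},
\]
and invoke the definition \eqref{mx} of $m$ to replace $\sum_i w_i G_{li}$ by $m(\boldsymbol x)$. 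Recalling $\sigma_s\ge0$, this gives
\[
\sum_{l=1}^L w_l\Big(\sigma_t(v_h^l)^2-\sigma_s\sum_{i=1}^L w_i G_{li}v_h^i v_h^l\Big)\ge\sum_{l=1}^L w_l(\sigma_t-m\sigma_s)(v_h^l)^2\ge c_0'\sum_{l=1}^L w_l(v_h^l)^2
\]
by assumption \eqref{assm}. Integrating over $D$ yields the volume part of $|\!|\!|\boldsymbol v_h|\!|\!|^2$, and combining with the edge estimate proves $a_h^s(\boldsymbol v_h,\boldsymbol v_h)\gtrsim|\!|\!|\boldsymbol v_h|\!|\!|^2$. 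The main obstacle I expect is the edge bookkeeping in the first two steps: correctly pairing the two one-sided traces on each interior edge, verifying the exact cancellation against the stabilization's average-flux term, and tracking the sign on $\partial D$ so that the surviving boundary term is nonnegative. The coupling estimate itself is then a clean consequence of the nonnegativity and symmetry of $g$ and of \eqref{mx}--\eqref{assm}; note finally that the argument uses only $\widehat V_h^k\subset V_h$, so it applies verbatim to the sparse space.
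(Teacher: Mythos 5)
Your proof is correct and is precisely the standard coercivity argument that the paper itself omits (it refers the reader to Brezzi--Marini--S\"uli and Han--Huang--Eichholz for ``similar arguments''): elementwise integration by parts on the convective term, cancellation of the interior average-flux contributions, sign bookkeeping on $\partial D$ leaving the nonnegative term $\tfrac12\int_e|\boldsymbol\omega^l\cdot\boldsymbol n|(v_h^l)^2\,{\rm d}s$, and absorption of the scattering coupling via the symmetry and nonnegativity of $g$, the definition \eqref{mx} of $m$, and assumption \eqref{assm}. All steps check out; the resulting hidden constant is $\min\{c_0',\,1,\,1/(2\theta_0)\}$, which is admissible since $\theta_0$ is a fixed constant independent of $h$.
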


We denote the solutions of the original problem \eqref{RTE2}, the semi-discrete problem \eqref{semibs} and the stabilized discrete-ordinate DG method \eqref{stabilized} by $\{ u(\boldsymbol x,\boldsymbol\omega ^l) \}$, $\boldsymbol u = \{ u^l(\boldsymbol x) \}$ and $\boldsymbol u_h = \{ u_h^l\} $, respectively. The error is decomposed as
\begin{equation}\label{errdecomp}
\{ u(\boldsymbol x,\boldsymbol\omega ^l) \} - \boldsymbol u_h = ( \{ u(\boldsymbol x,\boldsymbol\omega ^l) \} - \{ u^l(\boldsymbol x) \} ) + ( \{ u^l(\boldsymbol x) \} - \{ u_h^l(\boldsymbol x) \} ),
\end{equation}
and measured by
\begin{equation}\label{L2Sn}
\| u - u_h \|_h = \Big( \sum\limits_{l = 1}^L w_l \| u( \cdot ,\boldsymbol\omega ^l) - u_h^l \|_{0,D}^2\Big)^{1/2}.
\end{equation}

\begin{theorem}\label{thrm:dodg}
Let $n$ be the degree of precision of the numerical quadrature and $d\ge 2$. Then under the assumption \eqref{assm}, for the sparse grid discrete-ordinate DG method, we have
\begin{align*}
  \| u - u_h \|_h &\lesssim c(\theta_0)| \log_2h |^dh^{k + 1/2}\Big( \sum\limits_{l = 1}^L w_l| u^l |_{\mathcal{H}^{k + 1}(\Omega )}^2  \Big)^{1/2}  \\
  &\quad + c(r',g)n^{ - r'}\Big( \int_D \| u(\boldsymbol x, \cdot ) \|_{r',S^2}^2 {\rm d}x \Big)^{1/2},
\end{align*}
where, $h=2^{-N}$, $u^l$ is the solution of \eqref{semibs}, $c(\theta_0) = \theta_0^{ - 1/2} + \theta_0^{1/2}$ and $c(r',g)$ is defined in \eqref{crg}.
\end{theorem}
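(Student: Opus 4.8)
The plan is to follow the error splitting \eqref{errdecomp}, treating the angular (discrete-ordinate) error $\{u(\boldsymbol x,\boldsymbol\omega^l)\}-\{u^l(\boldsymbol x)\}$ and the spatial (DG) error $\{u^l(\boldsymbol x)\}-\{u_h^l(\boldsymbol x)\}$ independently, and then combining them by the triangle inequality in the norm $\|\cdot\|_h$ of \eqref{L2Sn}. The two contributions will produce exactly the two terms on the right-hand side of the asserted bound: the angular part yields the $n^{-r'}$ term, and the spatial part yields the $|\log_2h|^dh^{k+1/2}$ term.

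For the angular error I would reproduce the discrete-ordinate argument of \cite{Han-Huang-Eichholz-2010}. Setting $e^l:=u(\cdot,\boldsymbol\omega^l)-u^l$ and subtracting \eqref{semibs} from \eqref{RTE2} evaluated at $\boldsymbol\omega=\boldsymbol\omega^l$, one finds that $e^l$ solves the same transport system, now driven by the quadrature defect $(Su)(\cdot,\boldsymbol\omega^l)-(S_du)(\cdot,\boldsymbol\omega^l)$ acting on the exact solution plus the scattering coupling of the errors themselves. Under assumption \eqref{assm} the continuous-in-space discrete-ordinate operator is stable in $\|\cdot\|_h$, so $\|e\|_h$ is controlled by the defect, which is in turn estimated by \eqref{spest} with $s=r'$. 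This produces the factor $c(r',g)n^{-r'}(\int_D\|u(\boldsymbol x,\cdot)\|_{r',S^2}^2\,{\rm d}\boldsymbol x)^{1/2}$ with a constant $c(r',g)$ depending only on $r'$ and the phase function $g$.

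For the spatial error I would use the standard DG machinery of coercivity and Galerkin orthogonality. Split each component as $u^l-u_h^l=\eta^l+\xi^l$ with $\eta^l=u^l-\boldsymbol Pu^l$ and $\xi^l=\boldsymbol Pu^l-u_h^l\in\widehat{\boldsymbol V}_N^k$, where $\boldsymbol P$ is the sparse $L^2$ projector. Since the semi-discrete solution is smooth enough on each element, Lemma~\ref{dgforweak} gives consistency of \eqref{stabilized}, hence Galerkin orthogonality $a_h^s(\boldsymbol u-\boldsymbol u_h,\boldsymbol v_h)=0$. Testing the coercivity of Lemma~\ref{lem:stability} with $\boldsymbol\xi$ then yields $|\!|\!|\boldsymbol\xi|\!|\!|^2\lesssim a_h^s(\boldsymbol\xi,\boldsymbol\xi)=-a_h^s(\boldsymbol\eta,\boldsymbol\xi)$, so everything reduces to bounding $|a_h^s(\boldsymbol\eta,\boldsymbol\xi)|$ term by term. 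The crucial observation is that the sparse space is closed under $\boldsymbol\omega^l\cdot\nabla$: because $\partial_{x_m}W_{n_m}^k\subset V_{n_m}^k=\bigoplus_{j\le n_m}W_j^k$, one gets $\boldsymbol\omega^l\cdot\nabla\xi^l\in\widehat{\boldsymbol V}_N^k$, and the $L^2$-orthogonality $\eta^l\perp\widehat{\boldsymbol V}_N^k$ makes the volume streaming term $\sum_{K}\int_K\eta^l(\boldsymbol\omega^l\cdot\nabla\xi^l)\,{\rm d}x$ vanish exactly. This is what dispenses with any $H^1$ bound on $\eta^l$ and keeps the rate at the half-order-optimal value. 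The remaining volume terms, namely the $\sigma_t$ mass term and the $\sigma_s$ scattering coupling, are bounded by Cauchy–Schwarz and the $L^2$ estimate of Lemma~\ref{lem:sparseProj}, contributing only at the higher order $|\log_2h|^dh^{k+1}$.

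The main obstacle is the edge contribution $b_{hs}^{(l)}(\eta^l,\xi^l)$, which is where both the $h^{k+1/2}$ rate and the constant $c(\theta_0)=\theta_0^{-1/2}+\theta_0^{1/2}$ are generated. I would split the stabilization weight $c_e^l=\theta_0|\boldsymbol\omega^l\cdot\boldsymbol n|$ symmetrically: for the flux part $\sum_{e}\int_e\{\!\!\{\boldsymbol\omega^l\eta^l\}\!\!\}\cdot[\![\xi^l]\!]$, pairing $|\boldsymbol\omega^l\cdot\boldsymbol n|^{1/2}\{\!\!\{\eta^l\}\!\!\}$ with $|\boldsymbol\omega^l\cdot\boldsymbol n|^{1/2}|[\![\xi^l]\!]|$ produces a factor $\theta_0^{-1/2}$ together with $(\sum_K\|\eta^l\|_{0,\partial K}^2)^{1/2}$ and $|\!|\!|\boldsymbol\xi|\!|\!|$, while for the penalty part $\sum_{e\in\mathcal E_h^0}\int_e c_e^l[\![\eta^l]\!]\cdot[\![\xi^l]\!]$, using $c_e^l\le\theta_0$ on the $\eta$ factor and retaining $c_e^l$ on the $\xi$ factor gives the complementary $\theta_0^{1/2}$. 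Invoking the boundary projection estimate $(\sum_K\|\eta^l\|_{0,\partial K}^2)^{1/2}\lesssim|\log_2h|^dh^{k+1/2}|u^l|_{\mathcal H^{k+1}(\Omega)}$ from Lemma~\ref{lem:sparseProj} then delivers the claimed spatial bound after summing over $l$ with weights $w_l$; adding $\|\boldsymbol\eta\|_h$ via the triangle inequality (itself of order $|\log_2h|^dh^{k+1}$) and combining with the angular estimate completes the argument. The delicate bookkeeping lies in tracking the powers of $\theta_0$ and in carrying the quadrature-weighted $\ell^2$ sum over directions through the scattering coupling so that it does not degrade the coercivity constant.
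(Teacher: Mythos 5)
Your proposal follows essentially the same route as the paper: the same angular/spatial error splitting, with the angular part handled by the discrete-ordinate stability-plus-consistency argument of Han--Huang--Eichholz (which the paper simply invokes via its eq.~(4.19)), and the spatial part handled by the sparse $L^2$ projection splitting, Galerkin orthogonality, coercivity from Lemma \ref{lem:stability}, exact cancellation of the volume streaming term, Cauchy--Schwarz with the quadrature-weighted scattering bound, and the $\theta_0$-balanced edge estimates producing $c(\theta_0)\,|\log_2 h|^d h^{k+1/2}$. Your justification of the streaming-term cancellation --- closure of the sparse space under coordinatewise differentiation, so that $\boldsymbol\omega^l\cdot\nabla\xi^l\in\widehat{\boldsymbol V}_N^k$ and the $L^2$ orthogonality of $\eta^l$ applies --- is in fact the precise version of the paper's looser remark that $\boldsymbol\omega^l\cdot\nabla\delta^l|_K\in\mathbb{Q}_k(K)$.
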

\begin{proof}
For the first part in \eqref{errdecomp}, let
\[\varepsilon ^l(\boldsymbol x): = u(\boldsymbol x,\boldsymbol\omega ^l) - u^l(\boldsymbol x),\quad 1 \le l \le L.\]
In view of the equation (4.19) in \cite{Han-Huang-Eichholz-2010}, one has
\[\sum\limits_{l = 1}^L w_l\int_D (\varepsilon ^l)^2 {\rm d}x  \lesssim c(r',g)^2n^{ - 2r'}\int_D \| u(\boldsymbol x, \cdot ) \|_{r',S^2}^2 {\rm d}x,\]
where
\begin{equation}\label{crg}
c(r',g): = c(r')\mathop {\sup }\limits_{\boldsymbol x \in D, \boldsymbol\omega  \in S^2} \|g(\boldsymbol x,\boldsymbol\omega  \cdot ) \|_{r',S^2}
\end{equation}
and $c(r')$ is a positive constant depending only on $r'$.

For the second part, let
\[u^l - u_h^l = (u^l - P_h^ku^l) - (u_h^l - P_h^ku^l) = : \eta ^l + \delta ^l,\]
where $P_h^k$ is the $L^2$-projection onto the sparse DG space $\widehat{V}_h^k$ (cf. Remark \ref{rem:Vh}). Similarly, we denote $\boldsymbol\eta  = \{ \eta ^l \}$ and $\boldsymbol\delta  = \{ \delta ^l \}$.  When $\boldsymbol u_h$ is replaced by the exact solution $\boldsymbol u$ of the semi-discrete problem, the weak continuity in Lemma \ref{dgforweak} yields
\[\int_e c_e^l[\![ u^l ]\!][\![ v_h^l ]\!] {\rm d}s = 0,\quad e \in \mathcal{E}_h^0.\]
From \eqref{bhs} we have $b_{hs}^{(l)}(u^l,v_h^l) = b_h^{(l)}(u^l,v_h^l)$, and hence
$a_h^s(\boldsymbol u,\boldsymbol v_h) = a_h(\boldsymbol u,\boldsymbol v_h)$,
which yields the following Galerkin orthogonality
\begin{equation}\label{GalerkinOrtho}
a_h^s(\boldsymbol u - \boldsymbol u_h,\boldsymbol v_h) = a_h(\boldsymbol u - \boldsymbol u_h,\boldsymbol v_h) = 0,\quad \boldsymbol v_h \in \boldsymbol V_h.
\end{equation}
According to the stability estimate in Lemma \ref{lem:stability}, we have
\begin{equation}\label{errfram}
  |\!|\!|\boldsymbol\delta|\!|\!|^2 \lesssim a_h^s(\boldsymbol\delta,\boldsymbol\delta ) = a_h^s(\boldsymbol u_h - P_h^k\boldsymbol u,\boldsymbol\delta )
  = a_h^s(\boldsymbol u - P_h^k\boldsymbol u,\boldsymbol\delta ) = a_h^s(\boldsymbol\eta ,\boldsymbol\delta ).
\end{equation}

We now estimate the right-hand side of \eqref{errfram}. Let
\[a_h^{(l)}(u^l,v_h) = {\rm I}_1^l(u^l,v_h) - {\rm I}_2^l(u^l,v_h),\]
where
\begin{align*}
  {\rm I}_1^l(u^l,v_h) & = \sum\limits_{K \in \mathcal{T}_h} \int_K \left(  - u^l(\boldsymbol\omega ^l \cdot \nabla v_h) + \sigma_tu^lv_h \right){\rm d}x , \\
  {\rm I}_2^l(u^l,v_h) & = \int_D \sigma_s\sum\limits_{i = 1}^L w_ig( \cdot ,\boldsymbol\omega ^l \cdot \boldsymbol\omega ^i)u^iv_h{\rm d}x.
\end{align*}
For the first term ${\rm I}_1^l$, noting that $\boldsymbol\omega ^l \cdot \nabla \delta^l|_K \in \mathbb{Q}_k(K)$, by the definition of the projector $P_h^k$,
\[\int_K \eta^l(\boldsymbol\omega ^l \cdot \nabla \delta^l) {\rm d}x = \int_K (u^l - P_h^ku^l)(\boldsymbol\omega ^l \cdot \nabla \delta^l) {\rm d}x =0,\]
which gives
\[
  |{\rm I}_1^l(\eta ^l,\delta ^l)|
  =  \Big|\sum\limits_{K \in \mathcal{T}_h} \int_K \left(  -\eta^l(\boldsymbol\omega ^l \cdot \nabla \delta^l) + \sigma_t\eta^l\delta ^l \right){\rm d}x\Big|
  \lesssim \sum\limits_{K \in \mathcal{T}_h} \| \eta ^l \|_{0,K}\| \delta ^l \|_{0,K}.
\]
The Cauchy-Schwarz inequality yields
\[
 \Big| \sum\limits_{l = 1}^L w_l{\rm I}_1^l(\eta ^l,\delta ^l)\Big|
   = \Big( \sum\limits_{l = 1}^L w_l\| \eta ^l \|_{0,D}^2 \Big)^{1/2}\Big( \sum\limits_{l = 1}^L w_l\| \delta ^l \|_{0,D}^2  \Big)^{1/2}.
\]
For the second one, using Lemma 4.3 in \cite{Han-Huang-Eichholz-2010}, we obtain
\begin{align*}
  \Big|\sum\limits_{l = 1}^L {w_l{\rm I}_2^l(\eta ^l,\delta ^l)} \Big|
  & \le \Big( \sum\limits_{l = 1}^L w_l\int_D m\sigma_s(\eta ^l)^2 {\rm d}x \Big)^{1/2}\Big( \sum\limits_{l = 1}^L w_l\int_D m\sigma_s(\delta ^l)^2 {\rm d}x\Big)^{1/2} \\
  & \lesssim \Big( \sum\limits_{l = 1}^L w_l\| \eta ^l \|_{0,D}^2  \Big)^{1/2}\Big( \sum\limits_{l = 1}^L w_l\| \delta ^l \|_{0,D}^2\Big)^{1/2},
\end{align*}
where $m = m(\boldsymbol x)$ is given in \eqref{mx}.

It remains to consider $b_{hs}^{(l)}(\eta ^l,\delta ^l)$. From (49) in \cite{Brezzi-Marini-Suli-2004} we have
\[|b_{hs}^{(l)}(\eta ^l,\delta ^l)| \le \sum\limits_{e \in \mathcal{E}_h} \Big( \frac{1}{\theta_0}\| c_e^{1/2}\{\!\!\{ \eta ^l\}\!\!\} \|_{0,e} + \| c_e^{1/2}[\![ \eta ^l ]\!] \|_{0,e} \Big) \| c_e^{1/2}[\![ \delta ^l ]\!] \|_{0,e}.\]
According to the choice of $c_e^l$, the Cauchy-Schwarz inequality gives
\[\Big|\sum\limits_{l = 1}^L {w_lb_{hs}^{(l)}(\eta ^l,\delta ^l)} \Big| \lesssim c(\theta_0)\Big( \sum\limits_{l = 1}^L w_l\sum\limits_{K \in \mathcal{T}_h} \| \eta ^l \|_{0,\partial K}^2 \Big)^{1/2}\Big( \sum\limits_{l = 1}^L w_l\sum\limits_{e \in \mathcal{E}_h} \| c_e^{1/2}[\![ \delta ^l ]\!]\|_{0,e}^2\Big)^{1/2},\]
where $c(\theta_0) = \theta_0^{ - 1/2} + \theta_0^{1/2}$ is a constant,
and hence
\[
  a_h^s(\boldsymbol\eta ,\boldsymbol\delta )
  \lesssim c(\theta_0)\Big[ \sum\limits_{l = 1}^L w_l\Big( \| \eta ^l \|_{0,D}^2 + \sum\limits_{K \in \mathcal{T}_h} \| \eta ^l \|_{0,\partial K}^2\Big)\Big]^{1/2}|\!|\!|\boldsymbol\delta|\!|\!|.
\]
This combined with \eqref{errfram} yields
\[|\!|\!|\boldsymbol\delta|\!|\!| \lesssim c(\theta_0)\Big[ \sum\limits_{l = 1}^L w_l\Big( \| \eta ^l \|_{0,D}^2 + \sum\limits_{K \in \mathcal{T}_h} \| \eta ^l \|_{0,\partial K}^2  \Big) \Big]^{1/2},\]
and with the error estimates of the sparse projection in Lemma \ref{lem:sparseProj} leads to the desired result.
\end{proof}

\section{Numerical results} \label{sec:numer}

In this section, we shall provide a series of numerical examples for solving the RTE \eqref{RTE2}-\eqref{inflowbd} to illustrate the performance of the proposed sparse grid discrete coordinate DG method.

\subsection{The linear system from the discrete problem}

The $S_n$ method has $n(n+2)$ directions with $n$ an even natural number. The discrete-ordinate sets satisfying the required moment equations to fourteen digits of accuracy have been given in \cite{Balsara-2001}.  Note that only the ordinates in the first octant are given there. The remaining ordinates can be obtained by using symmetry arguments. For example, $S_2$ data is given in Tab. \ref{S2sets}.
\begin{table}[!htb]
  \centering
  \caption{Discrete-ordinate sets for $S_2$~($\boldsymbol\omega = (s_1,s_2,s_3)$)}\label{S2sets}
  \begin{tabular}{llllll}
    \hline
    $s_1$ & $s_2$ & $s_3$ & $w$ \\ \hline
    $\pm0.5773502691896257$ & $\pm0.5773502691896257$ & $\pm0.5773502691896257$ & 1.5707963267948966 \\
    \hline
  \end{tabular}
\end{table}

Relabel the sparse bases by a single index $i=1,2,\cdots,M$ and denote them by $\varphi_i$, where $M = \dim \widehat{\boldsymbol V}_N^k$. The variational problem \eqref{stabilized} can be written in matrix form
  \begin{equation}\label{uzfd}
\boldsymbol A^{(l)}\hat{\boldsymbol U}^l - \sum\limits_{i = 1}^L \boldsymbol B_i^{(l)}\hat{\boldsymbol U}^i  + \boldsymbol C^{(l)}\hat{\boldsymbol U}^l = \boldsymbol F^{(l)},\quad 1 \le l \le L,
\end{equation}
where
 \[\boldsymbol A^{(l)} = ( a_{nm}^{(l)} ),\quad \boldsymbol B_i^{(l)} = ( b_{nmi}^{(l)} ),\quad \boldsymbol C^{(l)} = ( c_{nm}^{(l)} ),\]
 \[\hat{\boldsymbol U}^l = [ \hat u_1^l,\hat u_2^l, \cdots ,\hat u_M^l ]^T,\quad \boldsymbol F^{(l)} = [ F_1^{(l)},F_2^{(l)}, \cdots ,F_M^{(l)} ]^T,\]
 and
\[a_{nm}^{(l)} = \sum\limits_{K \in \mathcal{T}_h} \int_K (  - \varphi_m(\boldsymbol\omega ^l \cdot \nabla \varphi_n) + \sigma_t\varphi_m\varphi_n ) {\rm d}x ,\]
\[b_{nmi}^{(l)} = w_i\int_D \sigma_sg( \cdot ,\boldsymbol\omega ^l \cdot {\boldsymbol\omega} ^i){\varphi_m}\varphi_n {\rm d}x,\]
\[c_{nm}^{(l)} = \sum\limits_{e\not  \subset \Gamma_ - } \int_e \{\!\!\{ \boldsymbol\omega ^l\varphi_m \}\!\!\} \cdot [\![ \varphi_n ]\!]  {\rm d}s + \sum\limits_{e \in \mathcal{E}_h^0} \int_e c_e^l[\![ \varphi_m ]\!] \cdot [\![ \varphi_n ]\!]  {\rm d}s,\]
\[F_n^{(l)} = \int_D f^l\varphi_n {\rm d}x - \sum\limits_{e \subset \Gamma_ -} \int_e \boldsymbol\omega ^l \cdot \boldsymbol n\alpha^l\varphi_n {\rm d}s.\]
The system \eqref{uzfd} can be further rewritten in block matrix form
 \[\boldsymbol D^{(l)}\hat{\boldsymbol U} = \boldsymbol F^{(l)},\quad 1 \le l \le L,\]
where
\[ \boldsymbol D^{(l)}: = [  - \boldsymbol B_1^{(l)}, \cdots , - \boldsymbol B_{l - 1}^{(l)},{\boldsymbol A^{(l)}} - \boldsymbol B_l^{(l)} + \boldsymbol C^{(l)}, \cdots , - \boldsymbol B_L^{(l)} ].\]
The final linear system is
\begin{equation}\label{DUF}
\boldsymbol D\hat{\boldsymbol U} = \boldsymbol F,
\end{equation}
where
 \[\boldsymbol D = \left[ \begin{array}{*{20}{c}}
  \boldsymbol D^{(1)} \\
   \vdots  \\
  \boldsymbol D^{(L)}
\end{array} \right],\quad \hat{\boldsymbol U} = \left[ \begin{array}{*{20}{c}}
  \hat{\boldsymbol U}^1 \\
   \vdots  \\
  {\hat{\boldsymbol U}}^L
\end{array} \right],\quad \boldsymbol F = \left[ {\begin{array}{*{20}{c}}
  \boldsymbol F^{(1)} \\
   \vdots  \\
  \boldsymbol F^{(L)}
\end{array}} \right].\]
We solve \eqref{DUF} by using the block Gauss-Seidal iteration method.

The accuracy is measured by the weighted relative error defined by
\[\| u - u_h \|_{rel} = \frac{\left( \sum\limits_{l = 1}^L \omega_l\| u( \cdot ,\boldsymbol\omega ^l) - u_h^l \|_{0,D}^2 \right)^{1/2}}{\left( \sum\limits_{l = 1}^L \omega_l\| u( \cdot ,\boldsymbol\omega ^l) \|_{0,D}^2\right)^{1/2}},\]
where $u_h^l = \sum\limits_{m = 1}^M \hat u_m^l\varphi_m$ is the numerical solution.

\subsection{Examples in three dimensions}

\begin{example}\label{ex:1}
We take $\sigma_t = 2$, $\sigma_s = 1$ and $\eta  = 0$. The domain $D$ is a unit cube. With the right-hand side function
\begin{align*}
  f(\boldsymbol x,\boldsymbol\omega ) &= \pi s_1\cos (\pi x_1)\sin (\pi x_2)\sin (\pi x_3) + \pi s_2\sin (\pi x_1)\cos (\pi x_2)\sin (\pi x_3) \\
  & \quad  + \pi s_3\sin (\pi x_1)\sin (\pi x_2)\cos (\pi x_3) + \sin (\pi x_1)\sin (\pi x_2)\sin (\pi x_3),
\end{align*}
where $\boldsymbol\omega  = (s_1,s_2,s_3)$, the exact solution is
\[u(\boldsymbol x,\boldsymbol\omega ) = \sin (\pi x_1)\sin (\pi x_2)\sin (\pi x_3).\]
\end{example}

\begin{figure}[!htb]
\centering
\includegraphics[scale=0.5]{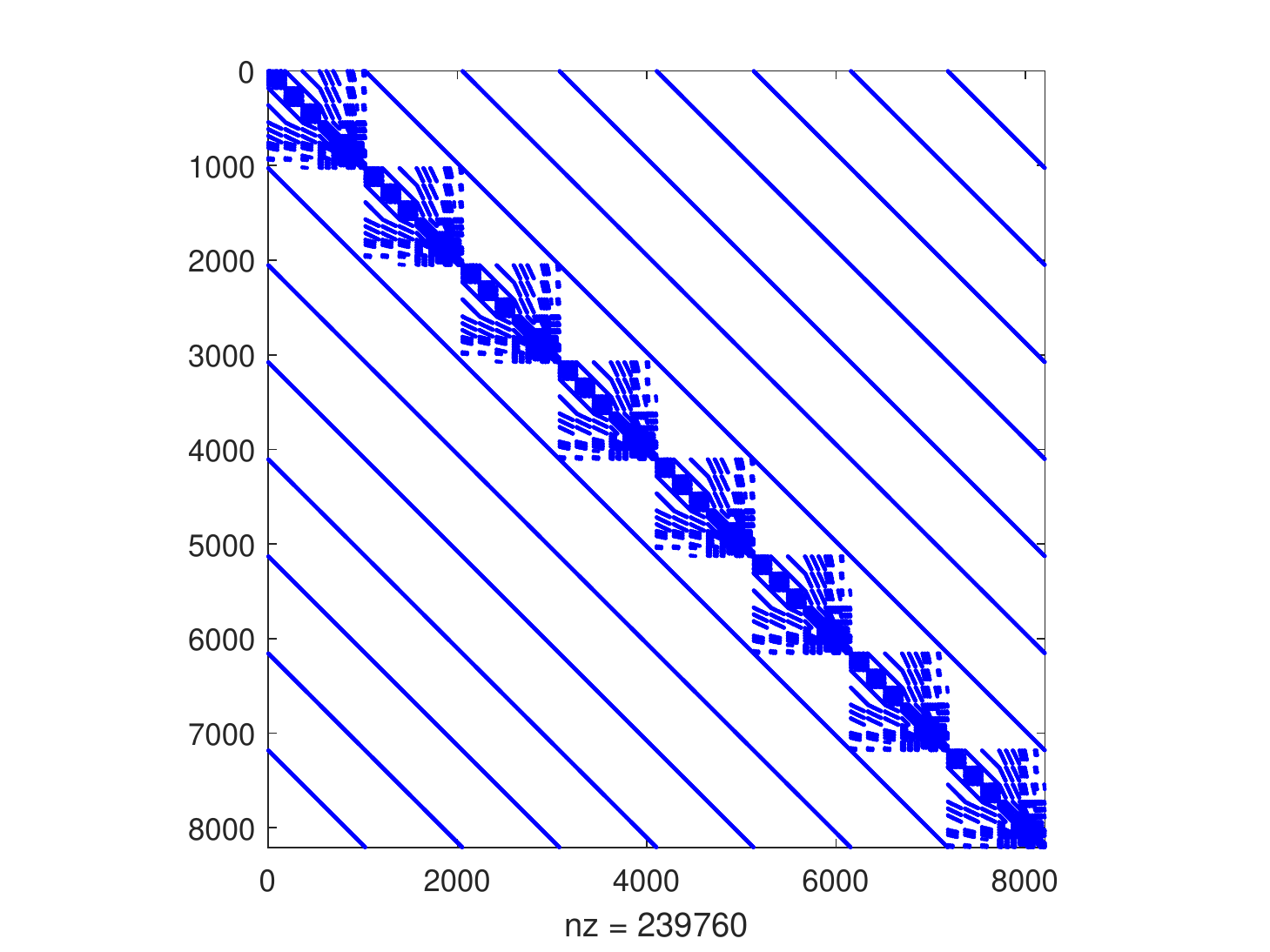}\\
\caption{Sparse pattern of the coefficient matrix for Example \ref{ex:1} ($N=3,k=2, n=2$)}\label{spar}
\end{figure}
The sparse pattern for the coefficient matrix is shown in Fig.~\ref{spar}. The total number of the entries is $8200 \times 8200 = {\text{67371264}}$ and the number of nonzero elements is ${\text{nz}} = 239760$. Thus the sparsity ratio is 99.64\%.
\begin{figure}[!htb]
  \centering
  \subfigure[Exact]{\includegraphics[scale=0.45]{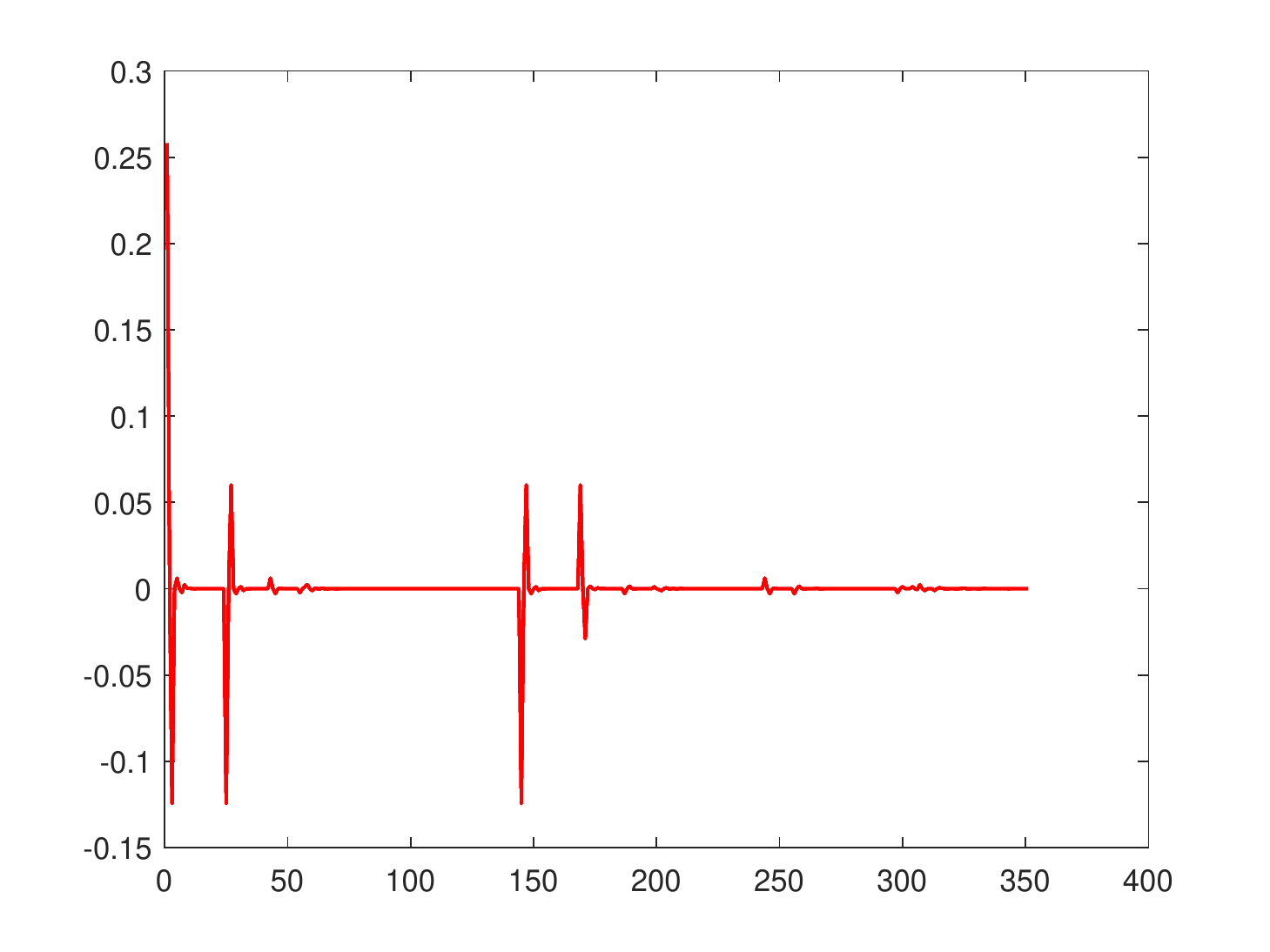}}
  \subfigure[$\theta_0 = 10$]{\includegraphics[scale=0.45]{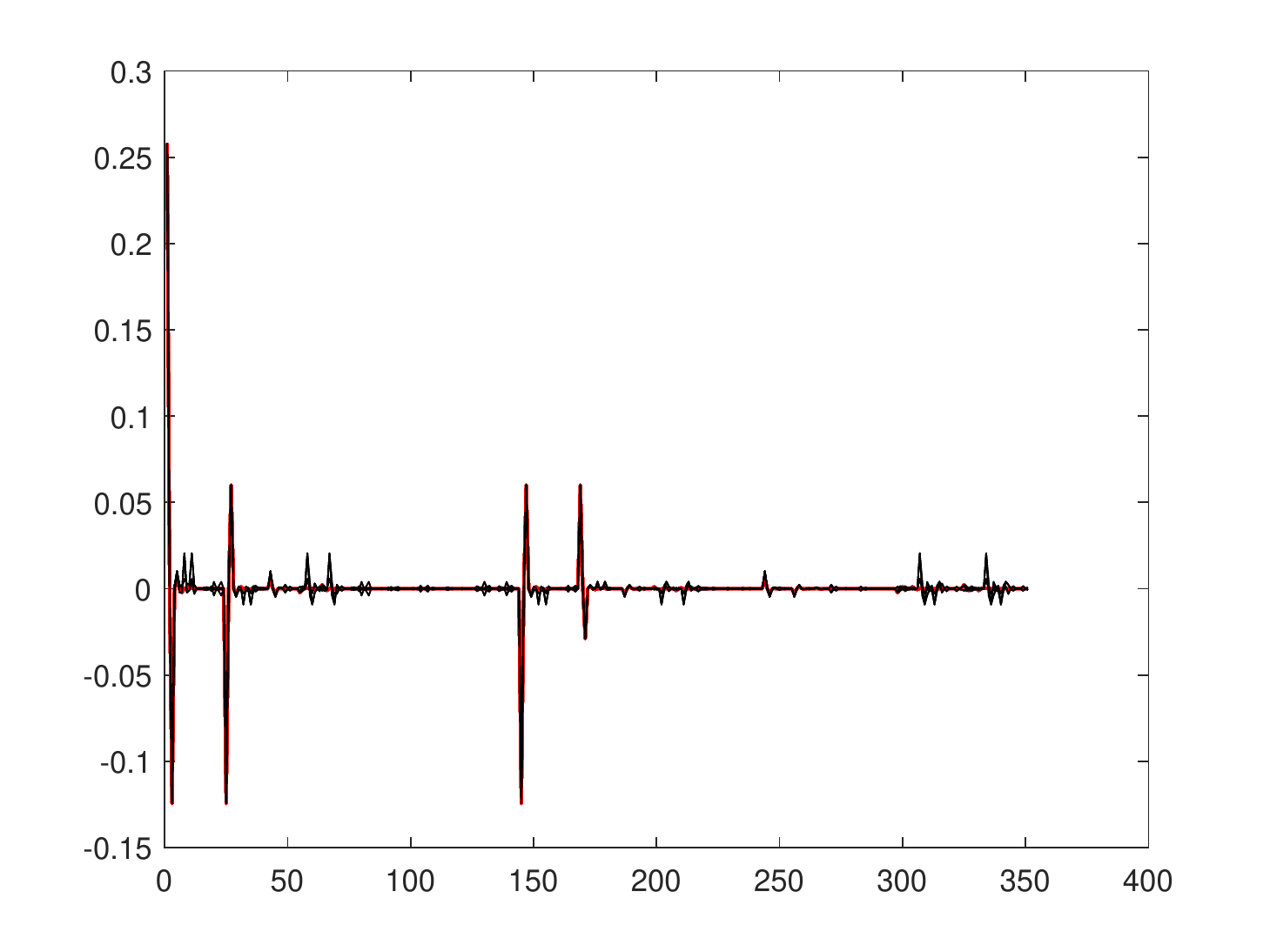}}\\
  \subfigure[$\theta_0 = 100$]{\includegraphics[scale=0.45]{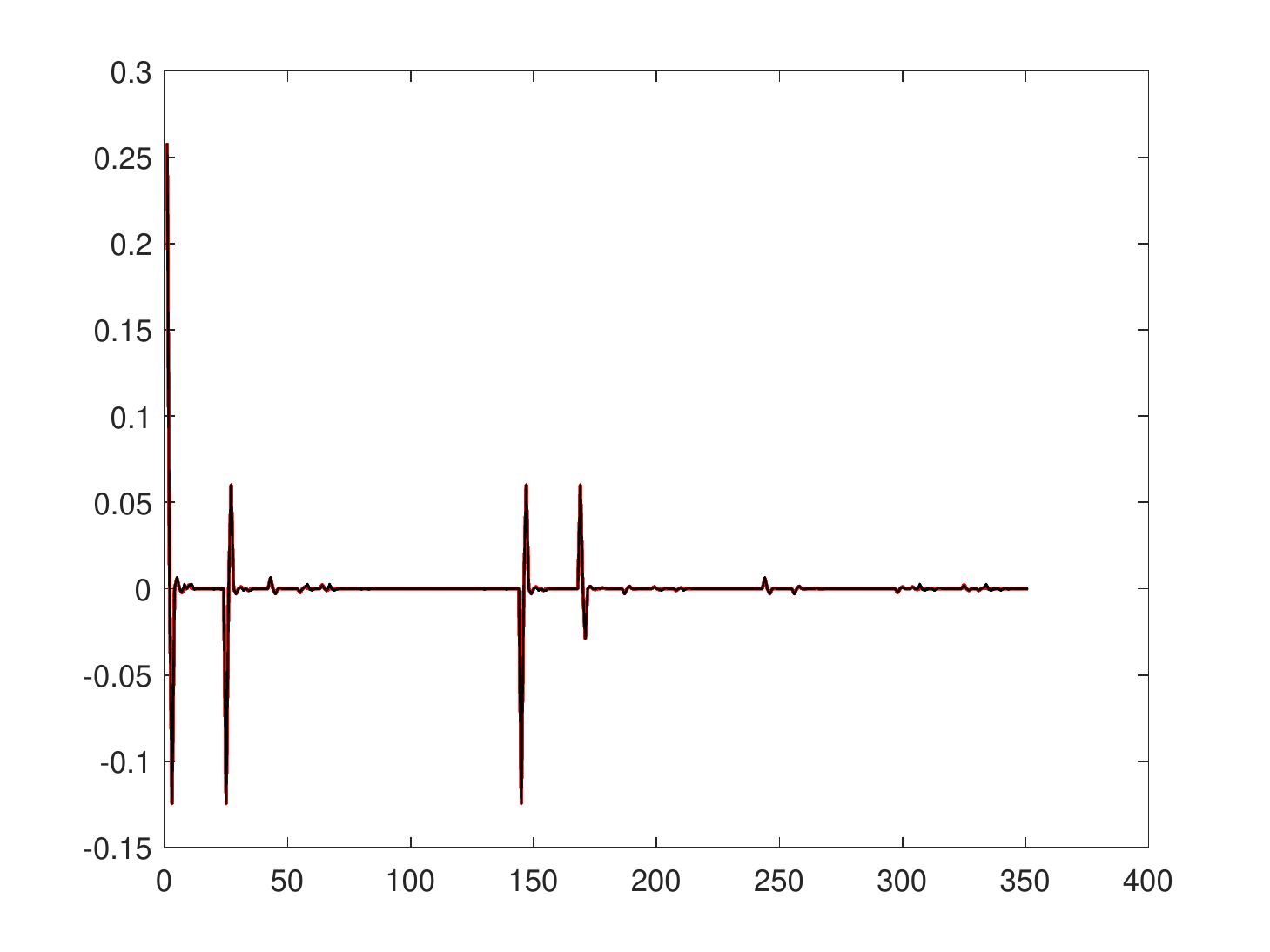}}
  \subfigure[$\theta_0 = 500$]{\includegraphics[scale=0.45]{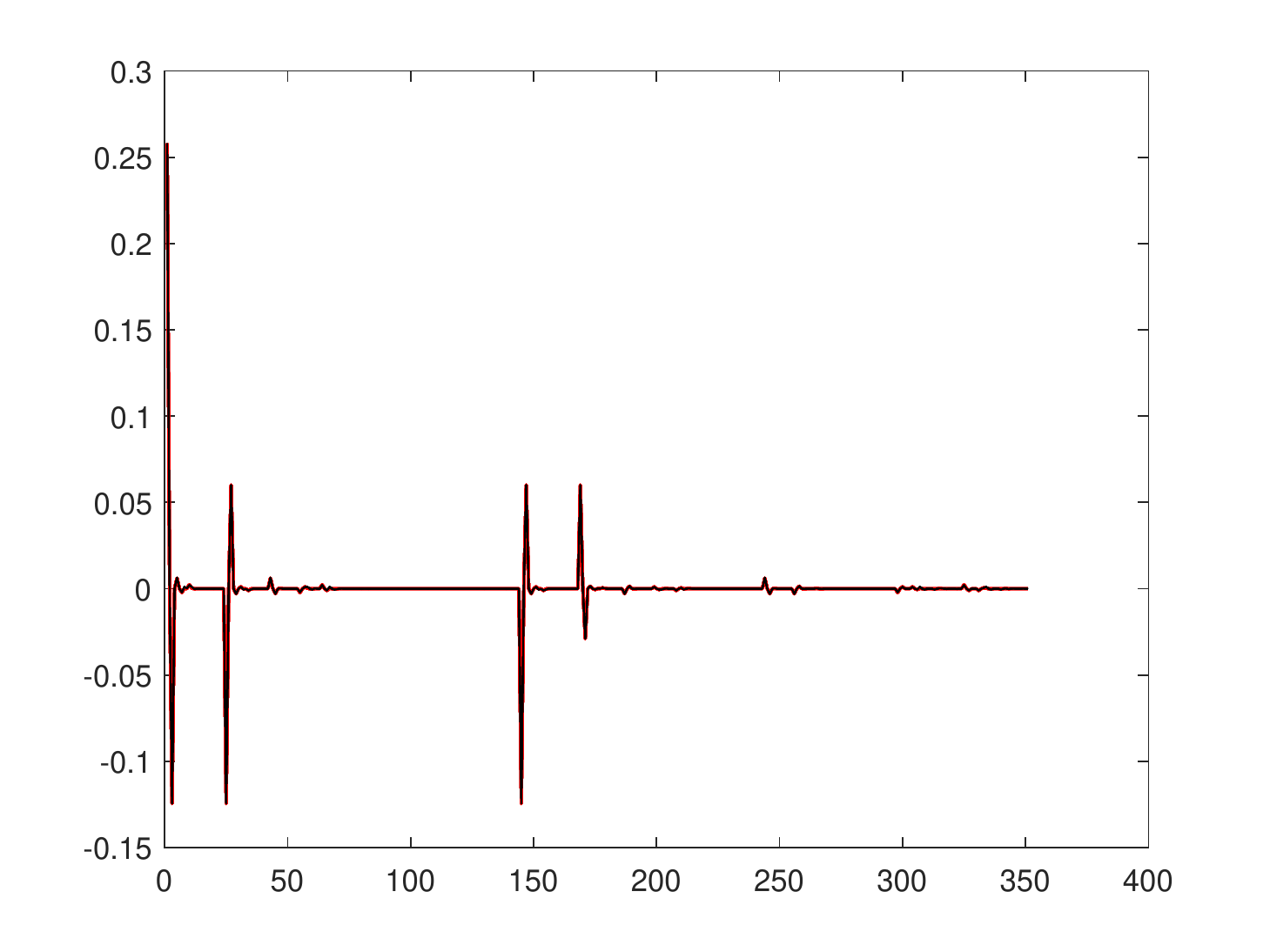}}\\
  \caption{The expansion coefficients for Example \ref{ex:1} with different stabilization parameters ~($S_2, k=2$)}\label{cofe}
\end{figure}

\begin{table}[!htb]
  \centering
  \caption{Relative errors for Example \ref{ex:1}: $k$ v.s. $S_n$ ($N = 2$)}\label{tab:ksn}
\begin{tabular}{ccccccccccccccc}
  \hline
    $n$      & 2          & 4          & 6          & 8          & 10 \\ \hline
    $k = 1$  & 1.7133e-01 & 1.7329e-01 & 1.7480e-01 & 1.7491e-01 & 1.7500e-01\\
    $k = 2$  & 8.9453e-03 & 8.3749e-03 & 8.0365e-03 & 8.0532e-03 & 8.0755e-03\\
    \hline
\end{tabular}
\end{table}
\begin{table}[!htb]
  \centering
  \caption{Relative errors for Example \ref{ex:1}: $N$ v.s. $S_n$ ($k = 2$)}\label{Nsn}
\begin{tabular}{ccccccccccccccc}
  \hline
    $n$    & 2          & 4          & 6          & 8          & 10 \\ \hline
    $N = 2$  & 8.9453e-03 & 8.3749e-03 & 8.0365e-03 & 8.0532e-03 & 8.0755e-03\\
    $N = 3$  & 2.2512e-03 & 2.1269e-03 & 2.0150e-03 & 2.0138e-03 & 2.0136e-03 \\
    \hline
\end{tabular}
\end{table}
Fig.~\ref{cofe} displays the expansion coefficients which coincide in each angular direction since the true solution is independent of the angular variable $\boldsymbol\omega$. We observe a better result for bigger stabilization parameter $\theta_0$. In the following we always choose $\theta_0 = 10^{N + k}$ due to its good performance in different cases. For the isotropic case $\eta =0$, $S_2$ method is enough to resolve the solution accurately in angle as indicated by the numerical results in Tabs.~\ref{tab:ksn} and \ref{Nsn}. For the given example, the error is then dominated by the spatial problems. According to Theorem \ref{thrm:dodg}, the error bound is $\mathcal{O}(c(\theta_0)| \log_2h |^dh^{k + 1/2})$. With the choice for $\theta_0$ in this case, we have $c(\theta_0)| \log_2h |^dh^{k + 1/2} \approx \mathcal{O}(| \log_2h |^dh^k)$ and the logarithmic factor implies a slightly lower order than $k$. From Tab.~\ref{tab:S2ex1}, we see that the convergence rates for $k=1,3$ are better than $\mathcal{O}(h^{k + 1/2})$ and even the $(k+1)$-th order can be obtained for $k=3$. For $k=2,4$ the order is about $k$.

\begin{table}[!htb] 
\footnotesize
\caption{$L^2$ errors of $S_2$ method for Example \ref{ex:1}} \label{tab:S2ex1}
\centering
\begin{tabular}{ccccccccccccccccccccccccccc}
\hline
\multirow{2}{*}{$N$} &\multicolumn{2}{c}{$k=1$} &&\multicolumn{2}{c}{$k=2$} &&\multicolumn{2}{c}{$k=3$} &&\multicolumn{2}{c}{$k=4$}\\
\cline{2-3} \cline{5-6} \cline{8-9} \cline{11-12}
  & Err & rate         && Err & rate            && Err & rate         && Err & rate\\  
\hline
1 &4.8695e-01 &-       &&3.7626e-02 &-          &&3.8603e-03 &-       &&2.9324e-04 & -    \\
2 &1.7133e-01 &1.5070  &&8.9453e-03 &2.0725     &&2.1133e-04 &4.1911  &&1.6406e-05 & 4.1598\\
3 &5.6436e-02 &1.6021  &&2.2512e-03 &1.9904     &&1.2971e-05 &4.0261  &&7.8260e-07 & 4.3898\\
4 &1.6990e-02 &1.7319  &&5.6295e-04 &1.9996     &&8.2285e-07 &3.9785  && -         & -\\
\hline
\end{tabular}
\end{table}

 \begin{example}\label{ex:2}
  We take $\sigma_t = 3$ and $\sigma_s = 1$. The domain $D$ is a unit cube. The true solution is taken as
\[u(\boldsymbol x, \boldsymbol\omega)=10 \omega_3 \sin (\pi x_1) \sin (\pi x_2) \sin (\pi x_3),\]
from which we know after a direct manipulation that the right-hand side function is
\begin{align*}
f(\boldsymbol x, \boldsymbol{s})=& 10(\sigma_t -\eta \sigma_s )s_3 \sin (\pi x_1) \sin (\pi x_2) \sin (\pi x_3) \\
&+10 \pi s_3^2 \sin (\pi x_1) \sin (\pi x_2) \cos (\pi x_3)+10 \pi s_2 s_{3} \sin (\pi x_1) \cos (\pi x_2) \sin (\pi x_3) \\ &+10 \pi s_1 s_3 \cos (\pi x_1) \sin (\pi x_2) \sin (\pi x_3)
\end{align*}
where $\boldsymbol\omega = (s_1,s_2,s_3)$.
\end{example}

\begin{table}[!htb]
  \centering
  \caption{Relative errors for Example \ref{ex:2}~($S_2, \eta = 0.1$)}\label{tab:closeis}
\begin{tabular}{ccccccccccccccc}
  \hline
    $N$ & $k = 1$ & $k = 2$   & $k = 3$  & $k = 4$ \\ \hline
    1   & 2.2797e-01 & 1.6584e-02 & 1.7683e-03 & 2.1850e-04\\
    2   & 8.2048e-02 & 3.7848e-03 & 2.0045e-04 & 1.7282e-04\\
  \hline
\end{tabular}
\end{table}
  \begin{table}[!htb]
  \centering
  \caption{Relative errors for Example \ref{ex:2}~($N=1, \eta = 0.9$)}
  \label{tab:eta09}
\begin{tabular}{ccccccccccccccc}
  \hline
    $n$ & $k = 1$ & $k = 2$   & $k = 3$  & $k = 4$ \\ \hline
    2   & 6.6685e-01 & 6.6872e-01 & 6.6969e-01 & 2.0653e+01\\
    4   & 6.4693e-01 & 1.4665e+01 & 1.5884e+01 & 1.6226e+01\\
    6   & 1.1411e+00 & 1.2068e+00 & 1.2539e+00 & 1.2606e+00\\
    8   & 1.1511e-01 & 1.3098e-01 & 1.3206e-01 & 1.3212e-01\\
    10  & 7.3980e-02 & 7.8657e-02 & 7.9128e-02 & 7.9144e-02 \\
    12  & 4.3524e-02 & 3.2606e-02 & 3.2684e-02 & 3.2690e-02 \\
  \hline
\end{tabular}
\end{table}

We observe from Tab.~\ref{tab:closeis} that $S_2$ method is accurate enough for the anisotropy factor close to isotropic cases. However, for strong forward scattering of $\eta = 0.9$, it does not give a satisfactory result. We have to choose a larger $n$ to get an improved result, which, however, is not expected in real applications since $S_n$ method has $n(n+2)$ angular directions and hence $n(n+2)$ coupled spatial problems. In this case, some models have been developed to
approximate the integral operator (cf. \cite{Han-Eichholz-Wang-2012,Sheng-Han-2013,Zheng-Han-2011}).
Another approach is to combine the sparse grid technique with the spherical harmonic method.

\begin{figure}[!tbh]
\centering
\includegraphics[scale=0.5]{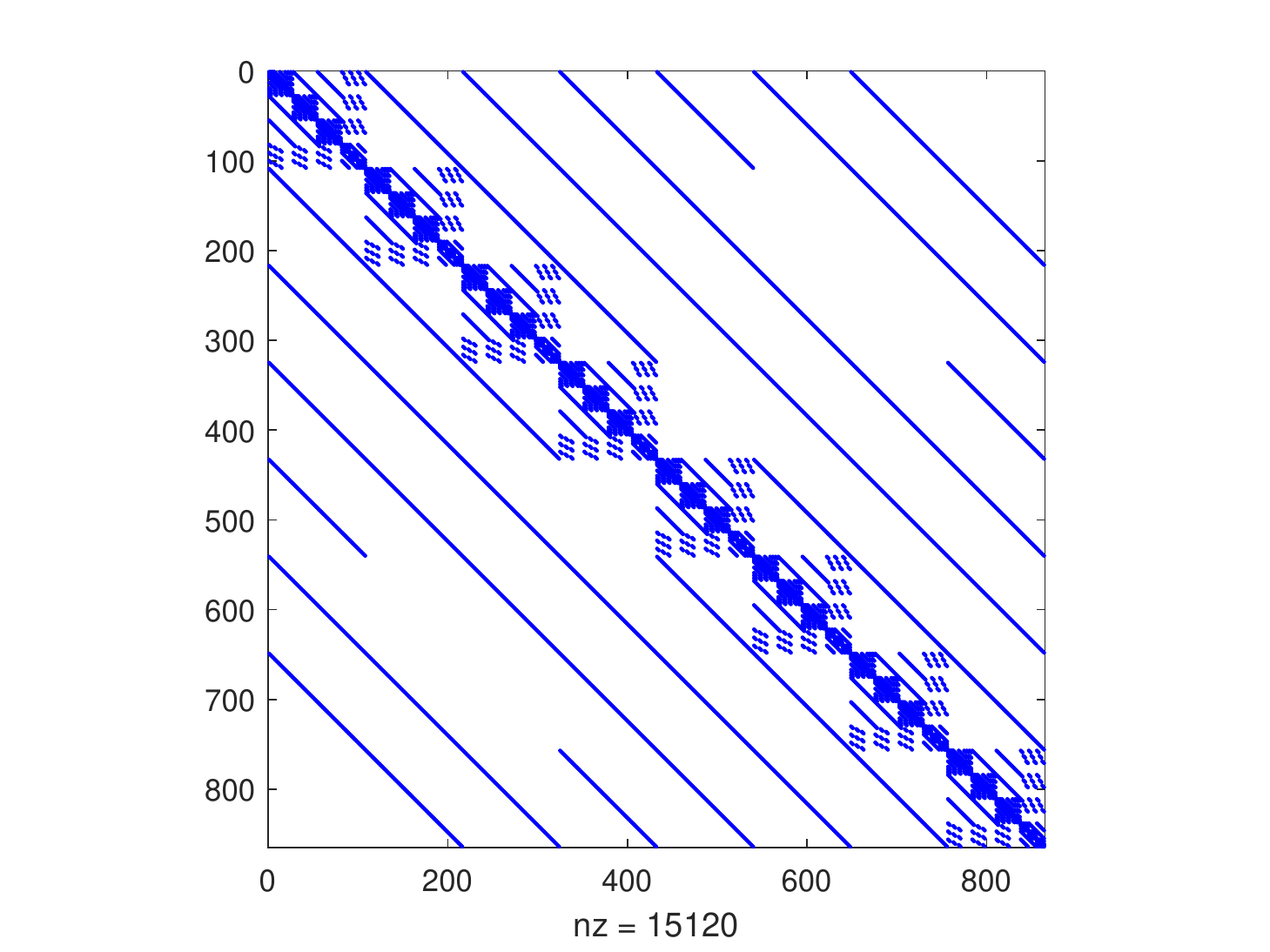}\\
\caption{Sparse pattern of the coefficient matrix for Example \ref{exsam} ($N=1,k=2, n=2$)}\label{SAM0}
\end{figure}

\begin{example}\label{exsam}
This example is taken from the reference \cite{Liu-1994}, where the Henyey-Greenstein function is replaced by the simplified approximate Mie (SAM):
\[
g(t) = K_S(1+t)^{n_p},~~t \in [ - 1,1],
\]
where $n_p = \frac{2\eta}{1-\eta}$ is the anisotropic index and $K_S = \frac{1}{2\pi}\frac{n_p+1}{2^{n_p+1}}$ is the
normalization factor. The geometric  parameters and the true solution are the same as Example \ref{ex:2}.
\end{example}

For $S_2$ method with $N=1$ and $k = 2$, the sparse pattern for the coefficient matrix is shown in Fig.~\ref{SAM0}. We also display the numerical and exact coefficients and $L^2$ projections at $z=0$ associated with the first angular direction in Fig.~\ref{SAM1}. We repeat the test for highly forward-peaked scattering with $\eta =0.9$.
From Tab.~\ref{tab:SAMeta09} we observe a relatively smaller errors than that from Tab.~\ref{tab:eta09}, but the convergence behaviours are the same since the errors do not decrease significantly with the increase of $k$ and $n$.

\begin{figure}[!tbh]
\centering
\includegraphics[scale=0.7]{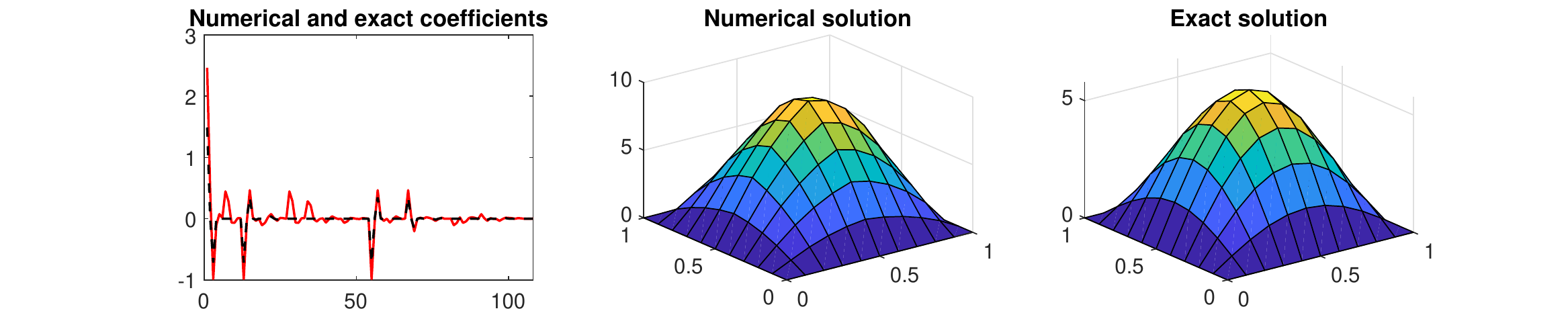}\\
\caption{Numerical and exact coefficients and $L^2$ projections for Example \ref{exsam} ($N=1,k=2, n=2$)}\label{SAM1}
\end{figure}

 \begin{table}[!htb]
  \centering
  \caption{Relative errors for Example \ref{exsam}~($N=1, \eta = 0.9$)}
  \label{tab:SAMeta09}
\begin{tabular}{ccccccccccccccc}
  \hline
    $n$ & $k = 1$ & $k = 2$   & $k = 3$  & $k = 4$ \\ \hline
    2   & 6.0818e-01 & 7.2904e-01 & 7.3901e-01 & 7.3967e-01\\
    4   & 3.7295e-01 & 4.0517e-02 & 3.1238e-02 & 3.1143e-02\\
    6   & 3.7622e-01 & 3.7490e-02 & 2.7350e-02 & 2.7243e-02\\
    8   & 3.7823e-01 & 2.8034e-02 & 1.0114e-02 & 9.7213e-03\\
    10  & 3.7872e-01 & 2.6477e-02 & 3.5129e-03 & 2.0719e-03 \\
    12  & 3.7875e-01 & 2.6452e-02 & 3.2775e-03 & 1.6395e-03 \\
  \hline
\end{tabular}
\end{table}

\subsection{Flux distributions in two and three dimensions}

We now investigate the impact of the source term on the flux distributions. The isotropic photon flux is defined by
\[q(\boldsymbol{x}) = \frac{1}{4\pi}\int_{S^2} u(\boldsymbol{x},\hat{\boldsymbol\omega} ) {\rm d}\sigma (\hat{\boldsymbol\omega} ).\]
For simplicity, vacuum boundary conditions are applied on all the boundaries. We always consider the isotropic scattering, and take $N=k=2$ for the spatial discretization. The examples in this subsection are taken from the reference \cite{Roberts-2010}.

\begin{example}\label{ex:source3D}
  This problem is defined on a unit cube with vacuum boundaries. The first 0.2 by 0.2 by 0.2 region $R$ contains a uniform isotropic source. For simplicity, we consider the following right-hand side function:
 \[f(\boldsymbol{x},\boldsymbol\omega) = f(\boldsymbol{x}) =
 \begin{cases}
 1, \quad \boldsymbol{x}\in R = [0,0.2]^3, \\
 0, \quad \boldsymbol{x}\in D\backslash R.
 \end{cases}\]
   The entire box is of uniform composition with the following data: $\sigma_t = 1$ and $\sigma_s = 0.4$.
\end{example}

For $z=0.1$ fixed, the contour plot of the flux distributions with varying orders of the discrete ordinates is displayed in Fig.~\ref{Fig:source3D}. We can see clearly that the contour map shows rays emanating from the source.
\begin{figure}[!tbh]
  \centering
  \subfigure[$S_2$]{\includegraphics[scale=0.35]{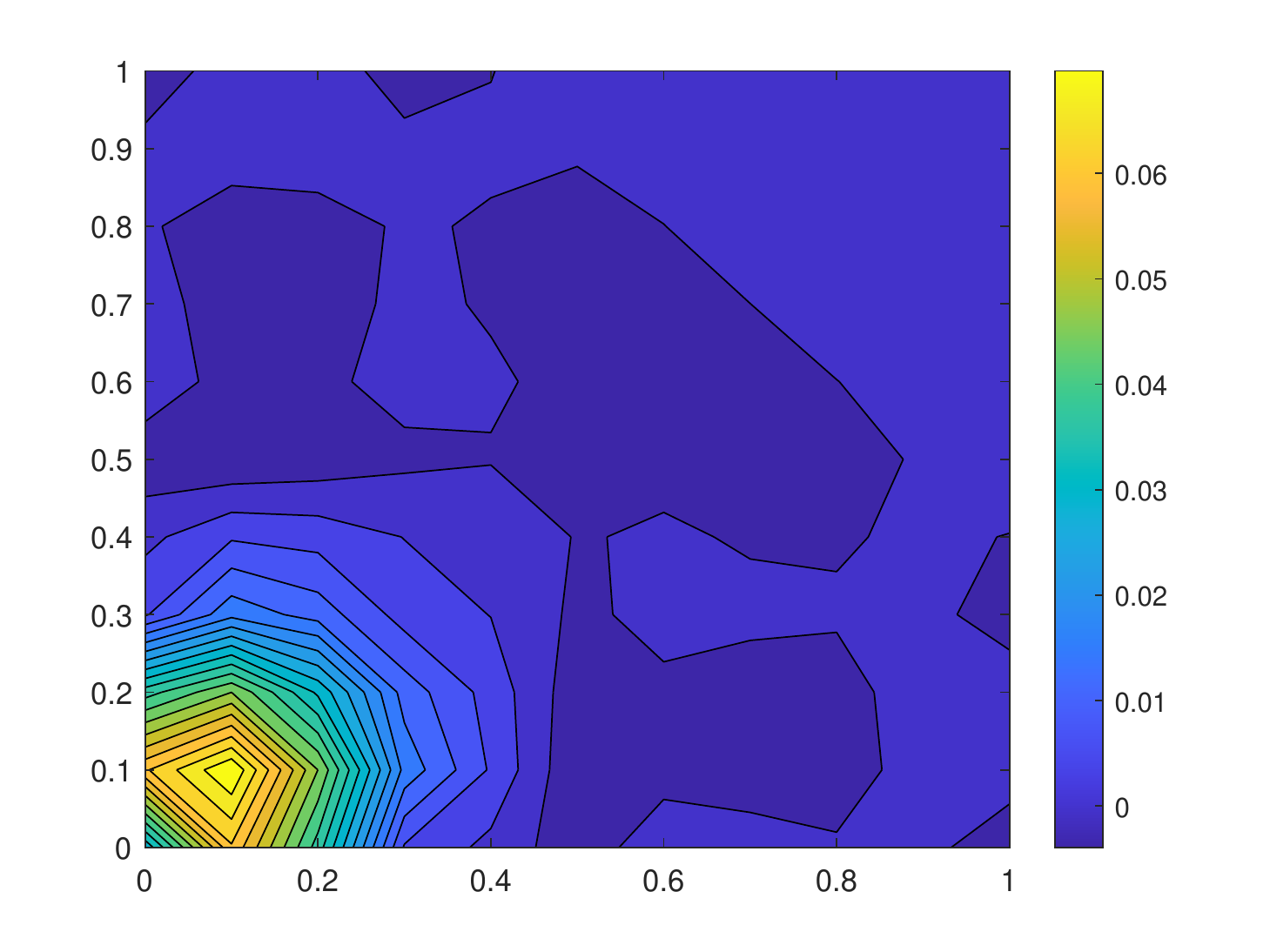}}
  \subfigure[$S_4$]{\includegraphics[scale=0.35]{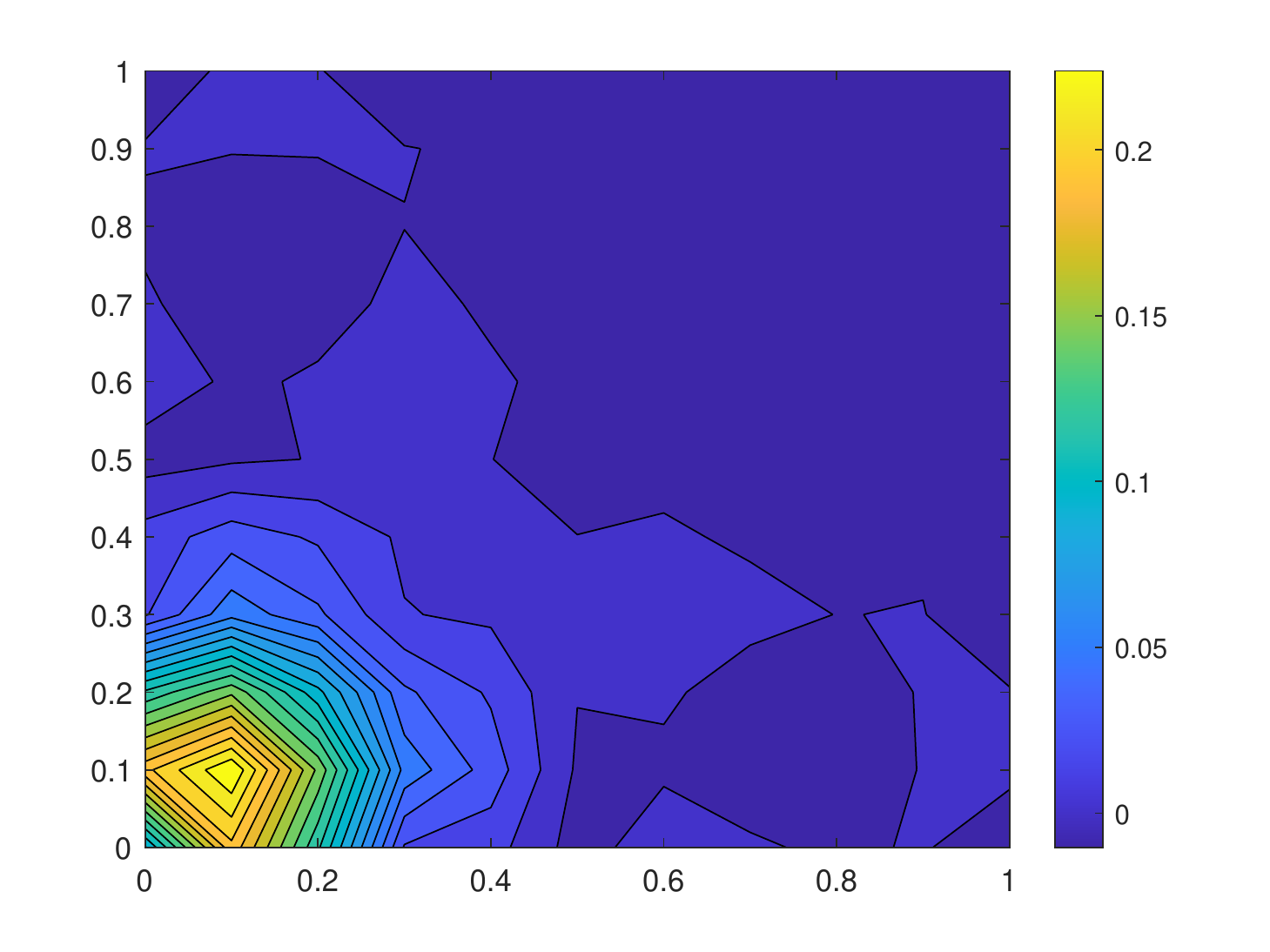}}
  \subfigure[$S_6$]{\includegraphics[scale=0.35]{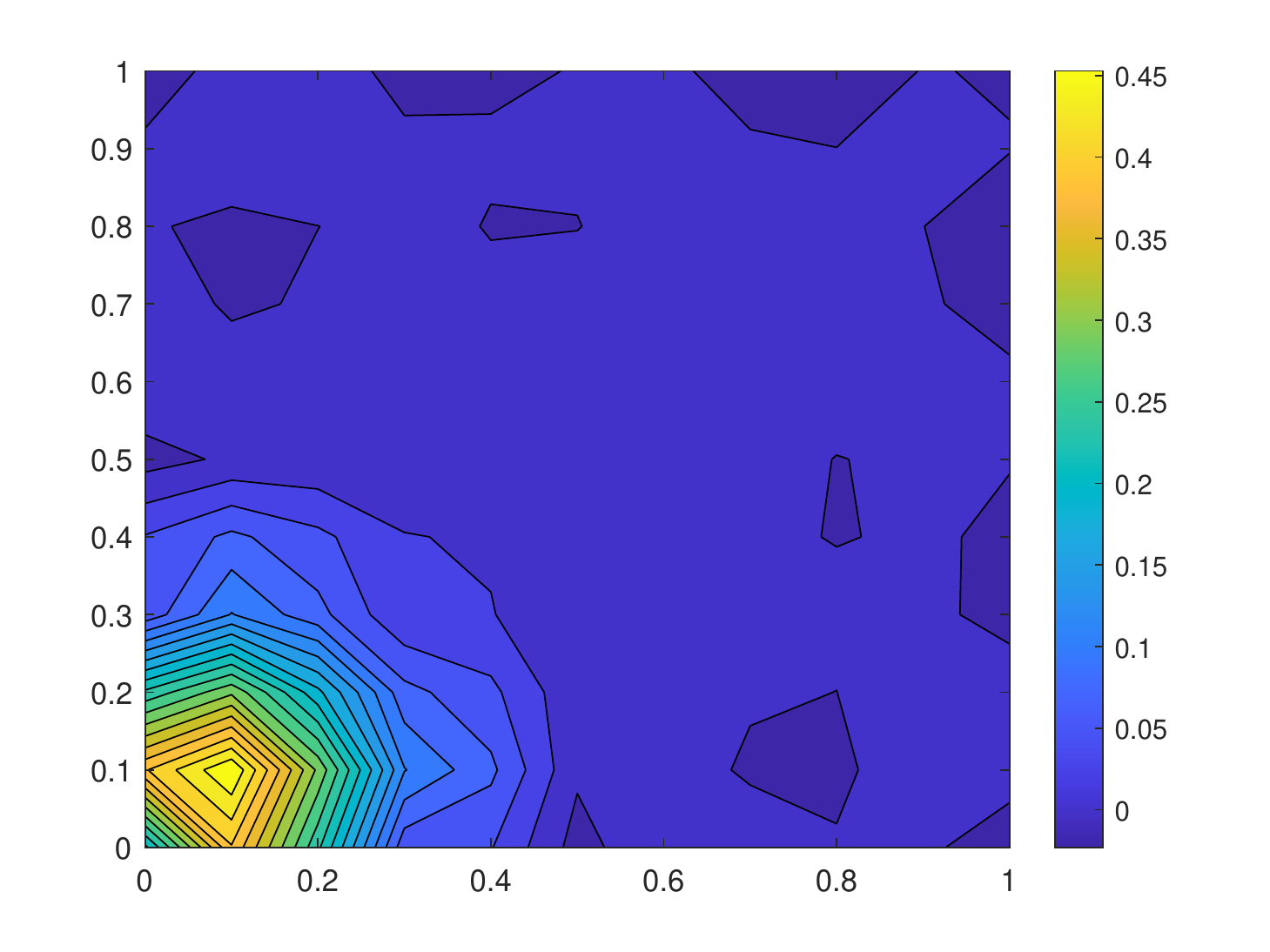}}\\
  \caption{Contour map of the photon flux distributions for Example \ref{ex:source3D}} \label{Fig:source3D}
\end{figure}

We now perform the test on the problem in $(x, y)$-geometry. In this case, all coefficients, the boundary data and the solution of \eqref{RTE2}-\eqref{inflowbd} are independent of the space variable $x_3 = z$.

\begin{example}\label{ex:source}
  This problem is defined on a unit square with vacuum boundaries. The first 0.2 by 0.2 region localized in the lower left corner contains a uniform isotropic source. The entire box is of uniform composition with the following data: $\sigma_t = 1$ and $\sigma_s = 0.4$.
\end{example}

For this example, we only consider the $S_4$ method. The contour plot of the flux distributions is displayed in Fig.~\ref{Fig:source}~(a). Numerical results of other cases are listed in Fig.~\ref{Fig:source}~(b)-(f) by changing the positions or increasing the numbers of the isotropic sources. In all cases, we again observe the rays emanating from the sources.

\begin{figure}[!tbh]
  \centering
  \subfigure[]{\includegraphics[scale=0.35]{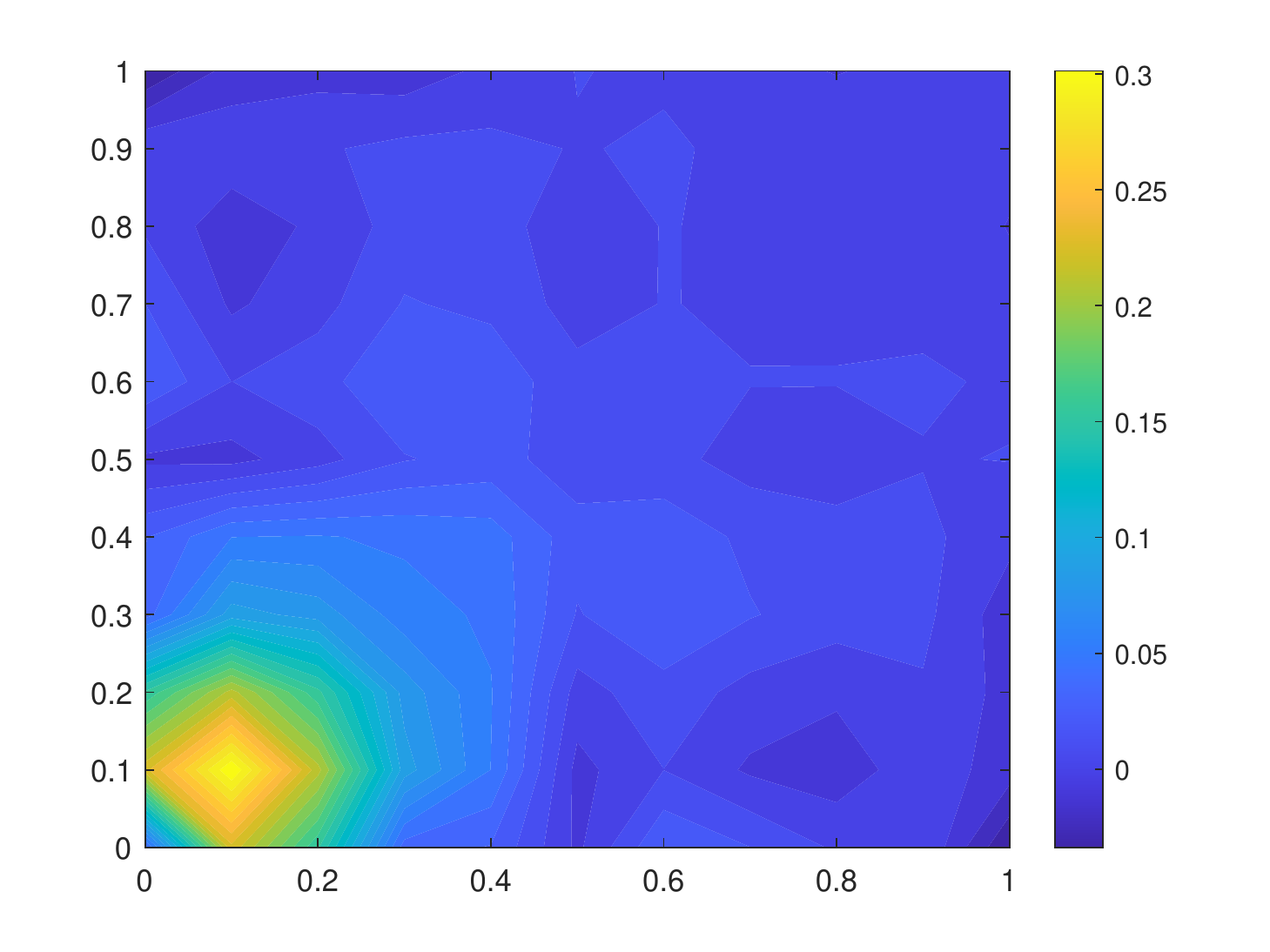}}
  \subfigure[]{\includegraphics[scale=0.35]{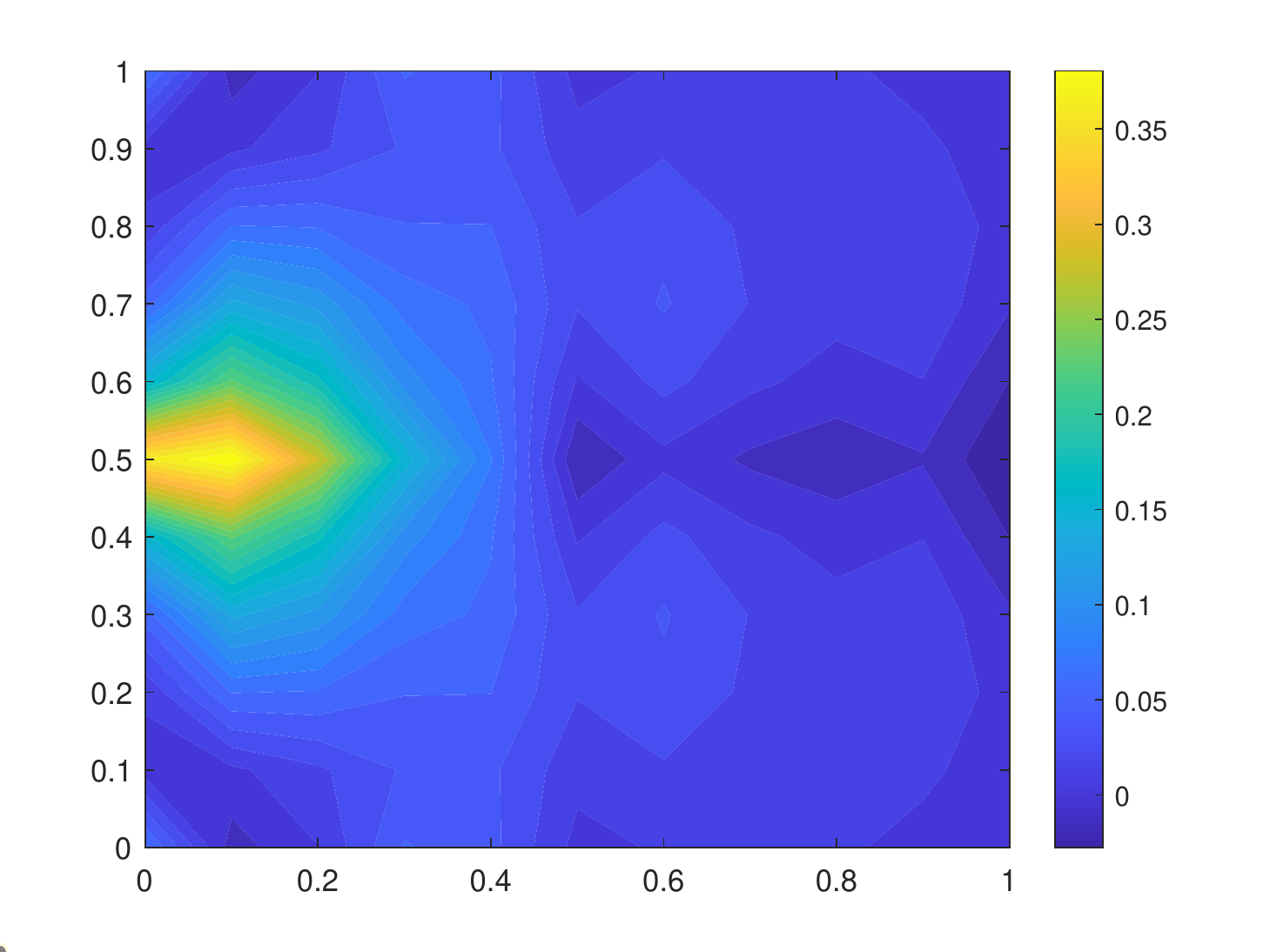}}
  \subfigure[]{\includegraphics[scale=0.35]{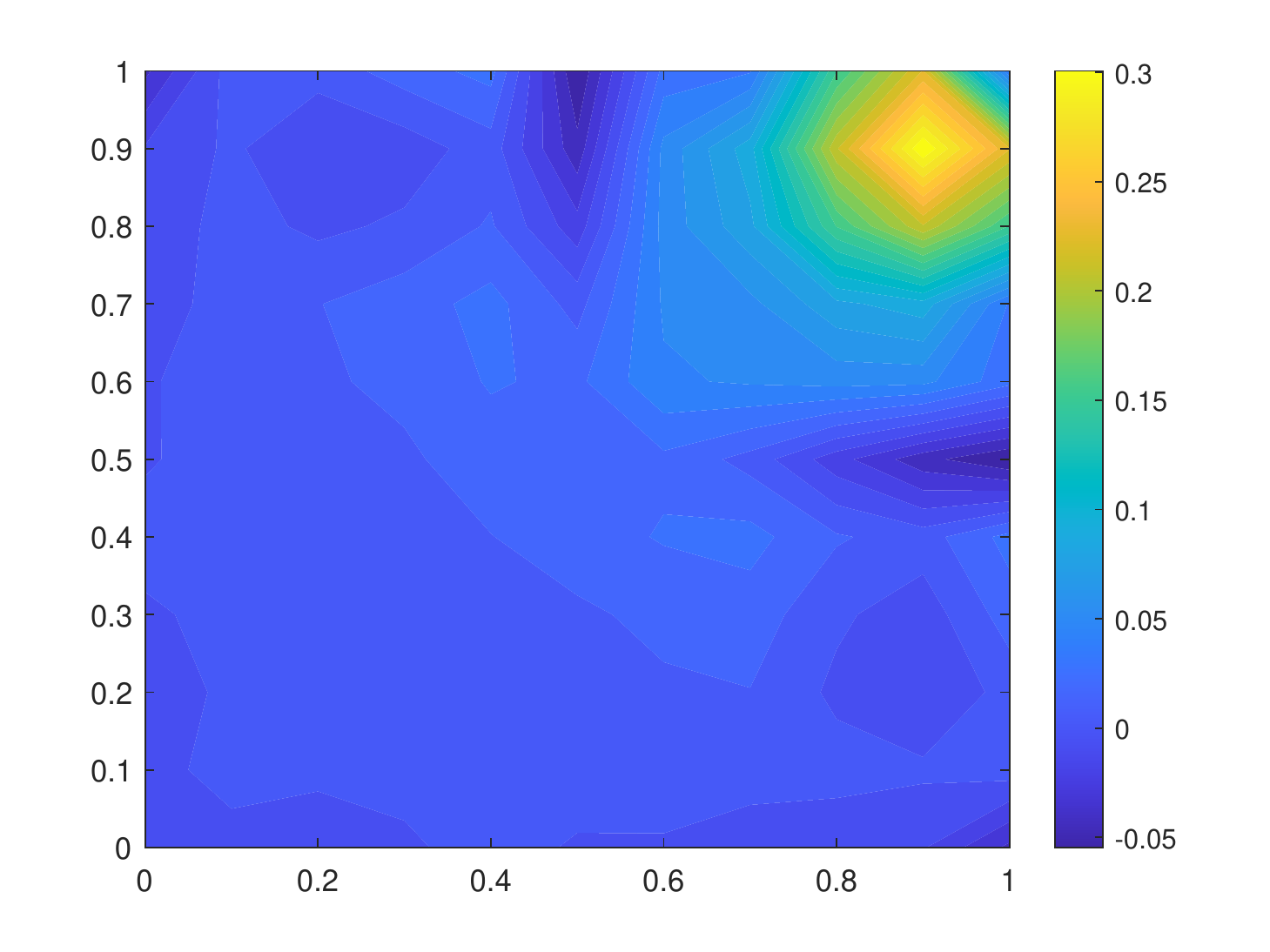}}
  \subfigure[]{\includegraphics[scale=0.35]{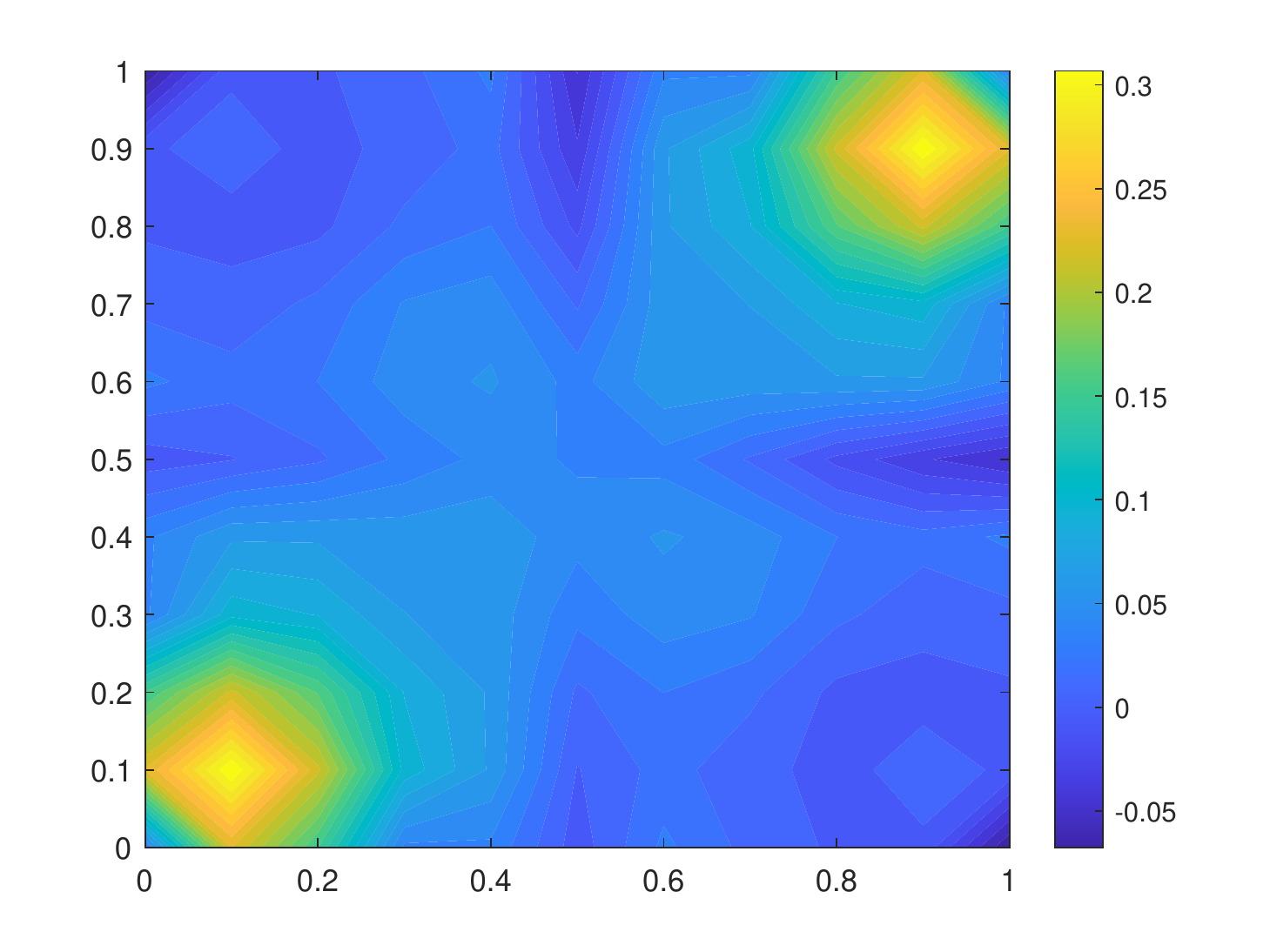}}
  \subfigure[]{\includegraphics[scale=0.35]{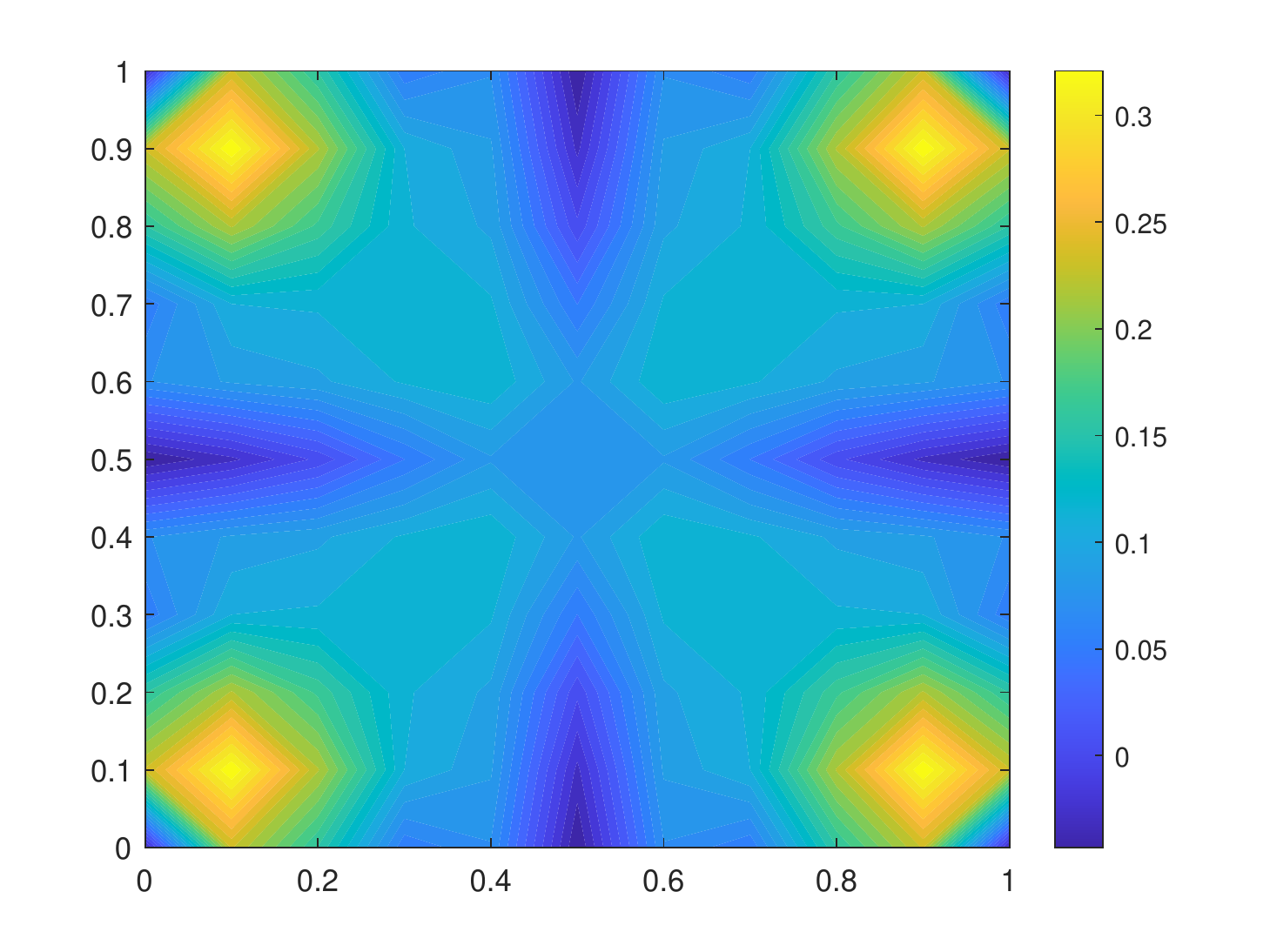}}
  \subfigure[]{\includegraphics[scale=0.35]{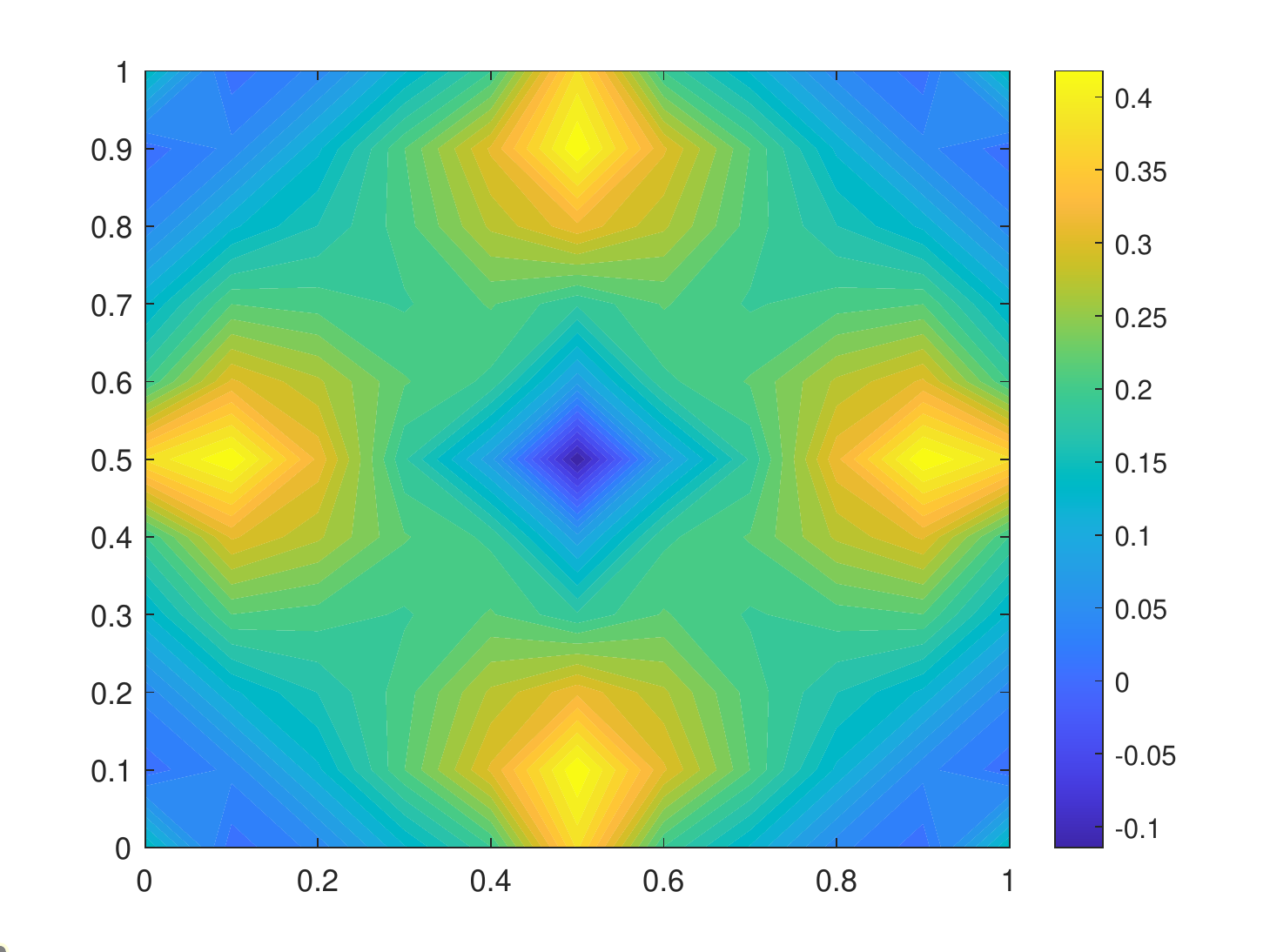}}\\
  \caption{Contour map of the photon flux distributions for Example \ref{ex:source}} \label{Fig:source}
\end{figure}

\subsection{Examples with complex spatial domains in two dimensions}

In the following, we extend the method to solve the RTE for some non-tensor product spatial domains in two dimensions. We always consider the isotropic scattering.

\begin{figure}[!tbh]
  \centering
  \includegraphics[scale=0.5]{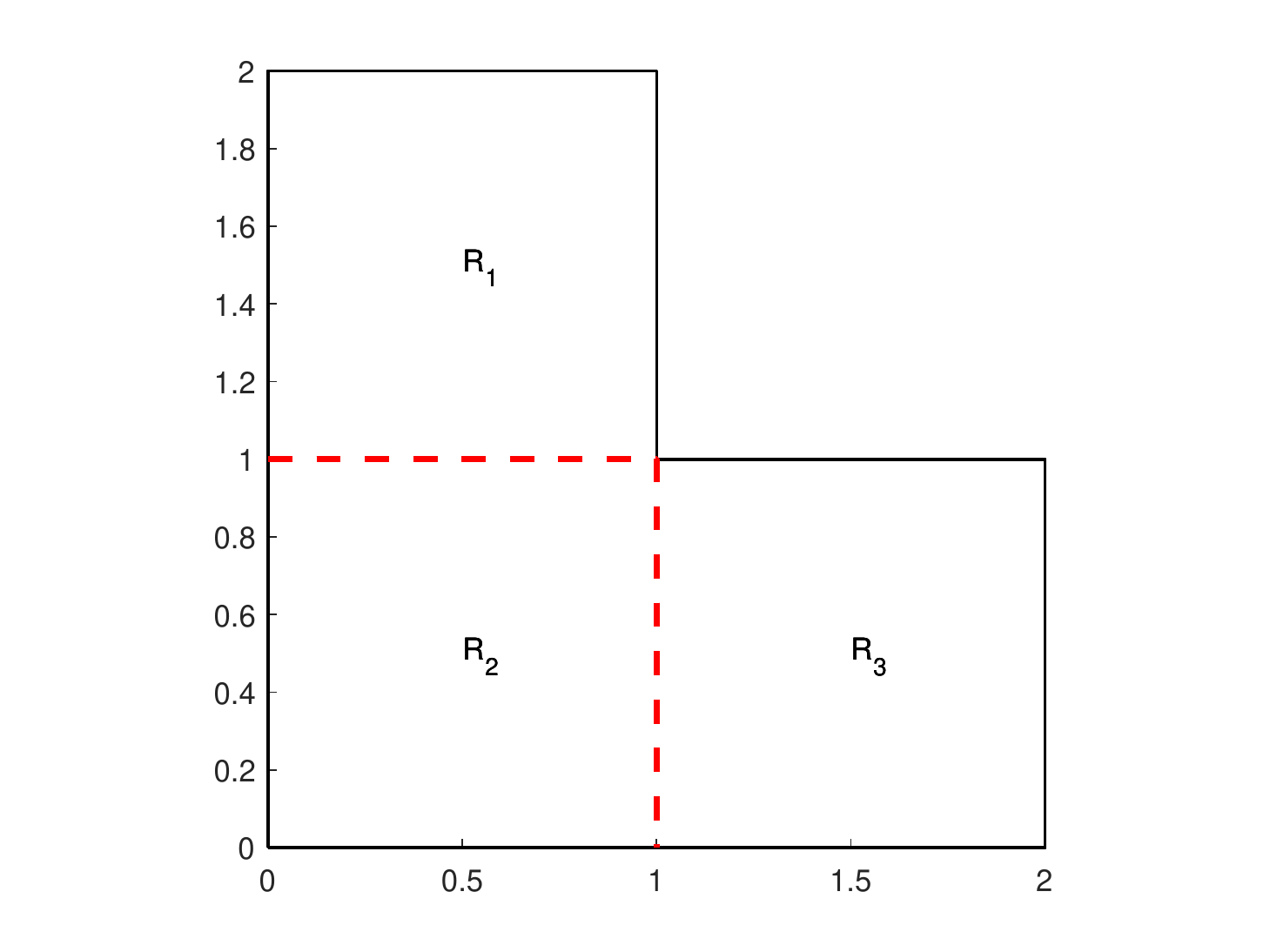}\\
  \caption{Initial subdivision of a $L$-shaped region for Example \ref{ex:3}}\label{Rt}
\end{figure}
\begin{example}\label{ex:3}
The spatial domain $D$ is an $L$-shaped region in 2-D displayed in Fig.~\ref{Rt}, consisting of three rectangles $R_1$, $R_2$ and $R_2$, where
\[R_1 = \Big\{(x_1,x_2): 0 \le x_1 \le 1,~~1 \le x_2 \le 2 \Big\},\]
\[R_2 = \Big\{(x_1,x_2): 0 \le x_1 \le 1,~~0 \le x_2 \le 1 \Big\},\]
\[R_3 = \Big\{(x_1,x_2): 1 \le x_1 \le 2,~~0 \le x_2 \le 1 \Big\}.\]
The parameters are the same as Example \ref{ex:1} and the true solution is
$u(\boldsymbol x,\boldsymbol\omega ) = \sin (\pi x_1)\sin (\pi x_2).$
\end{example}
\begin{figure}[!tbh]
  \centering
  \subfigure[Exact solution]{\includegraphics[scale=0.35]{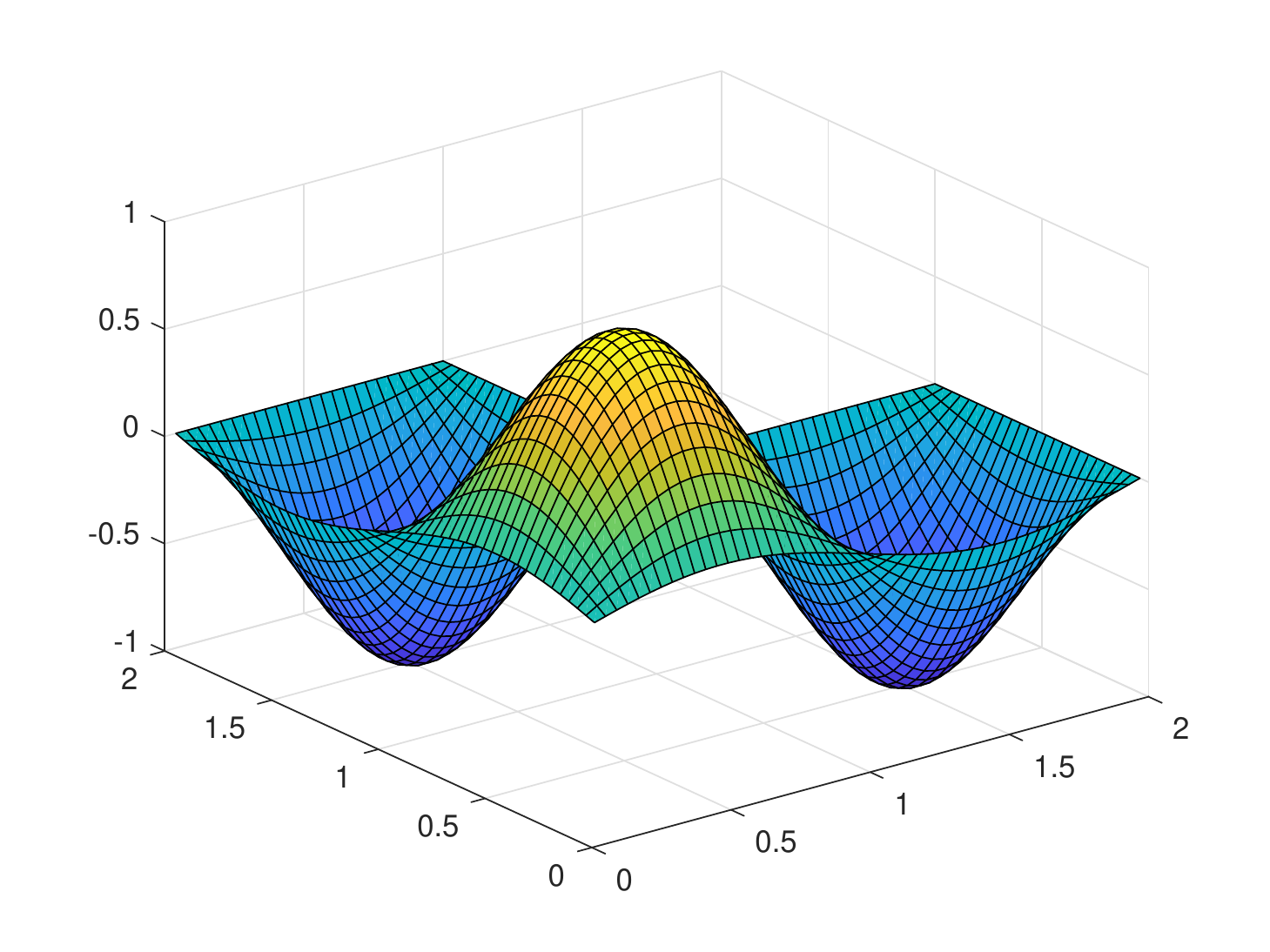}}
  \subfigure[$N=2, k = 1$]{\includegraphics[scale=0.35]{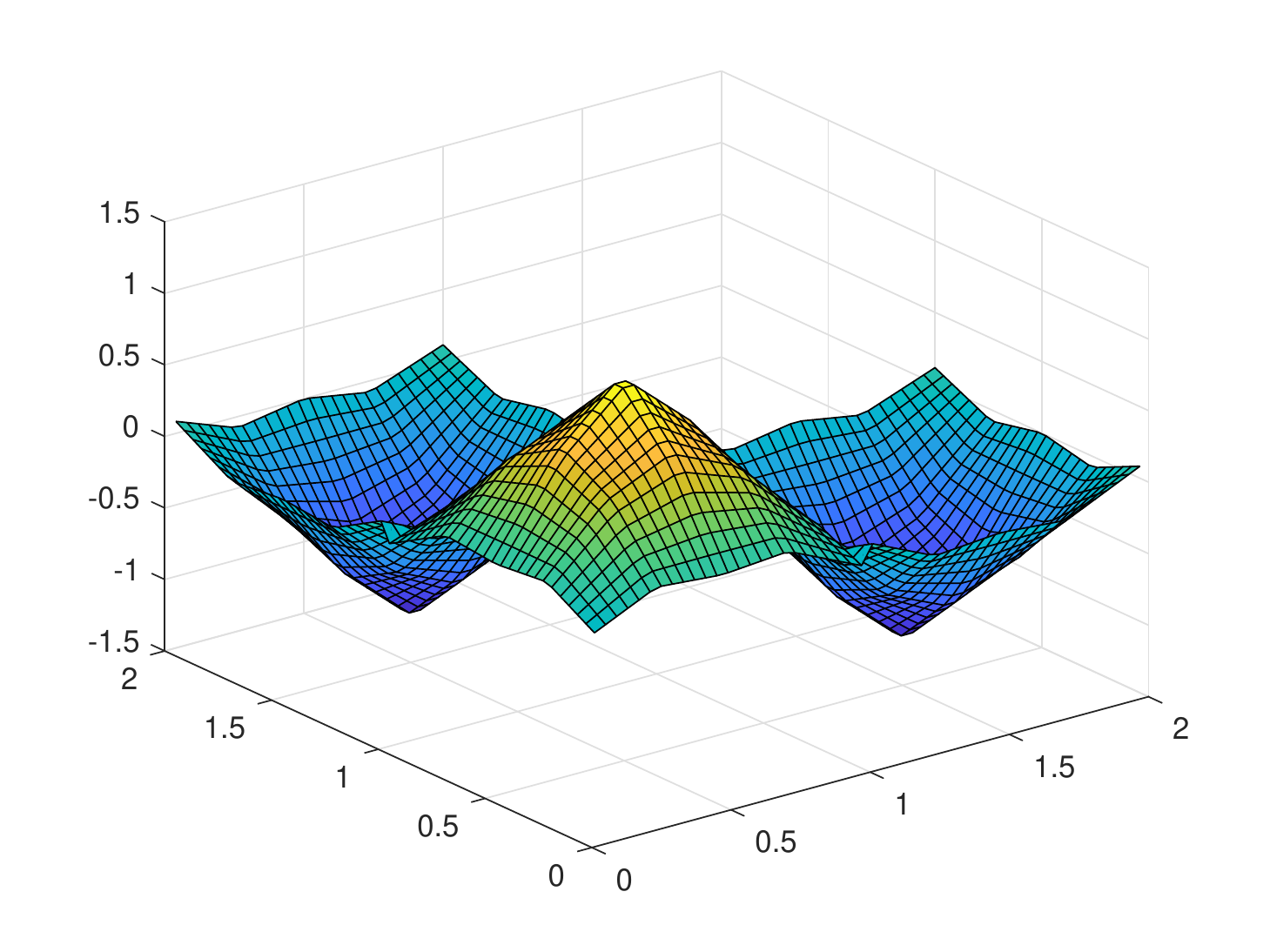}}
  \subfigure[$N=2, k=2$]{\includegraphics[scale=0.35]{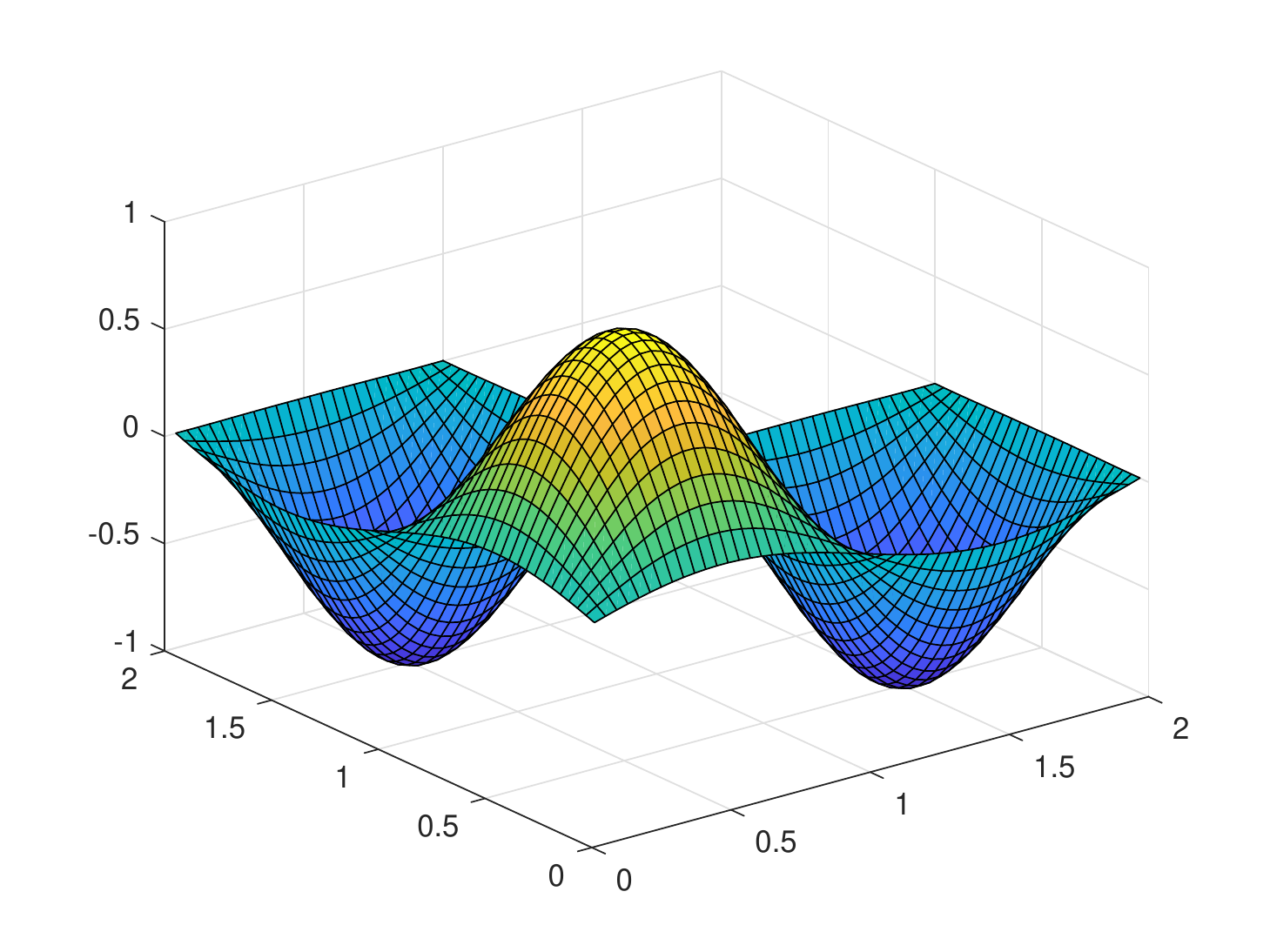}}\\
  \caption{Exact and numerical solutions for Example \ref{ex:3}} \label{Lshapedfig}
\end{figure}

Let $R = R_1 \cup R_2 \cup R_3$ be the initial subdivision of the $L$-shaped region and $\boldsymbol V_0^k(R)$ denote the piecewise polynomial space on $R$. We have the following orthogonal decomposition
\[\boldsymbol V_0^k(R) = \boldsymbol V_0^k(R_1) \oplus \boldsymbol V_0^k(R_2) \oplus \boldsymbol V_0^k(R_3),\]
where functions in $\boldsymbol V_0^k({R_j})~(j=1,2,3)$ are extended by zero to $\mathbb R^2$. For each $R_j$, one can regard it as $[0,1]^2$ and give the sparse representation by using an affine transformation. In Fig.~\ref{Lshapedfig}, we display the numerical solutions for different $N$ and $k$ and the relative errors are given in Tab.~\ref{tab:Lshaped1}.

\begin{table}[!htb]
  \centering
  \caption{Relative errors for Example \ref{ex:3}}\label{tab:Lshaped1}
\begin{tabular}{ccccccccccccccc}
  \hline
    $N$ & $k = 1$ & $k = 2$   & $k = 3$  & $k = 4$ \\ \hline
    1   & 2.2059e-01 & 1.6769e-02 & 1.7691e-03 & 1.3197e-04\\
    2   & 6.1359e-02 & 2.1758e-03 & 1.1360e-04 & 4.1841e-06\\
    3   & 1.7434e-02 & 3.0707e-04 & 7.2792e-06 & 2.1799e-07\\
    4   & 4.8163e-03 & 4.2519e-05 & 4.6425e-07 & -\\
  \hline
\end{tabular}
\end{table}
\begin{figure}[!tbh]
  \centering
  \subfigure[Initial division]{\includegraphics[scale=0.35]{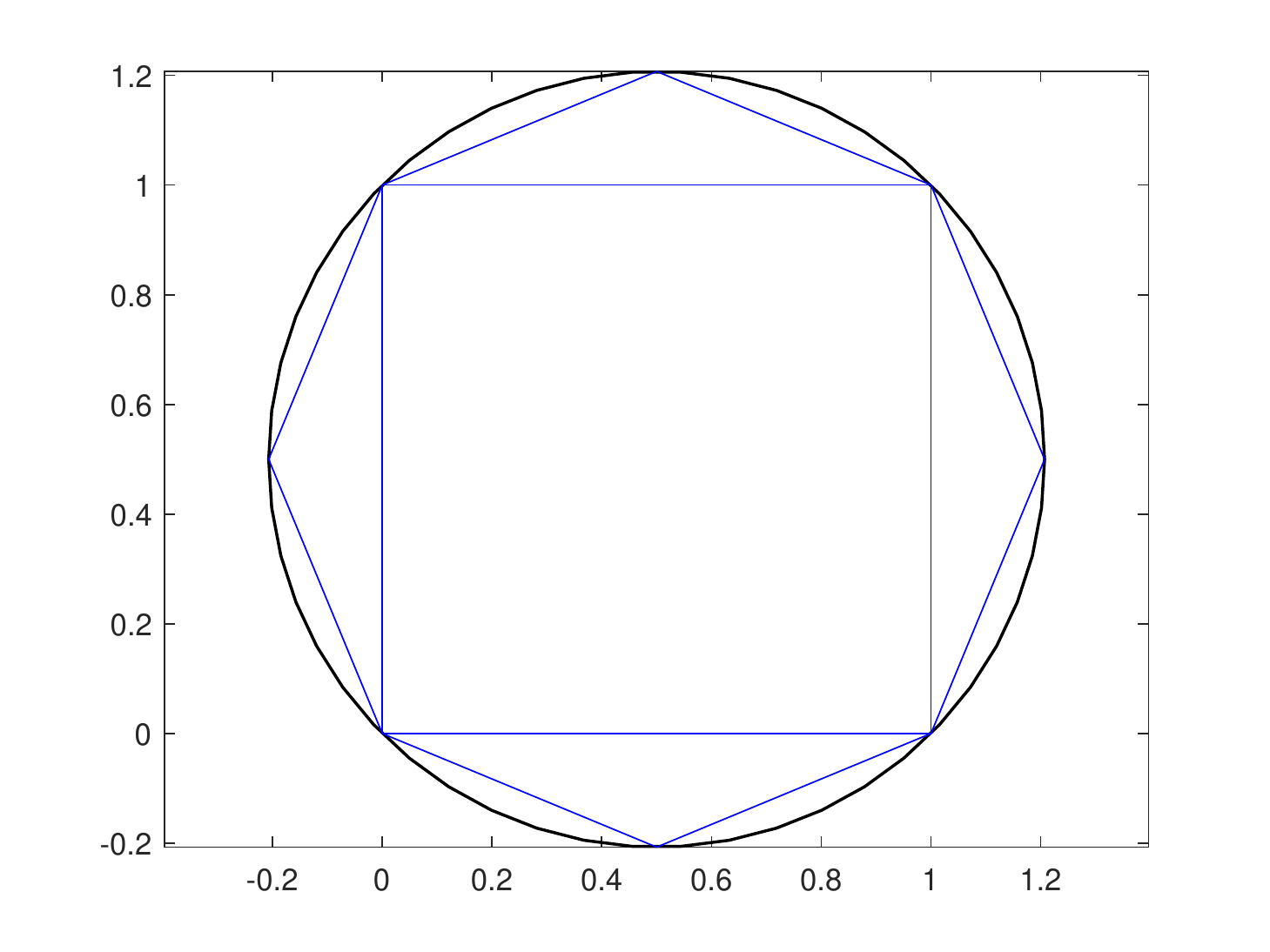}}
  \subfigure[The final division~$\mathcal{T}_h$]{\includegraphics[scale=0.35]{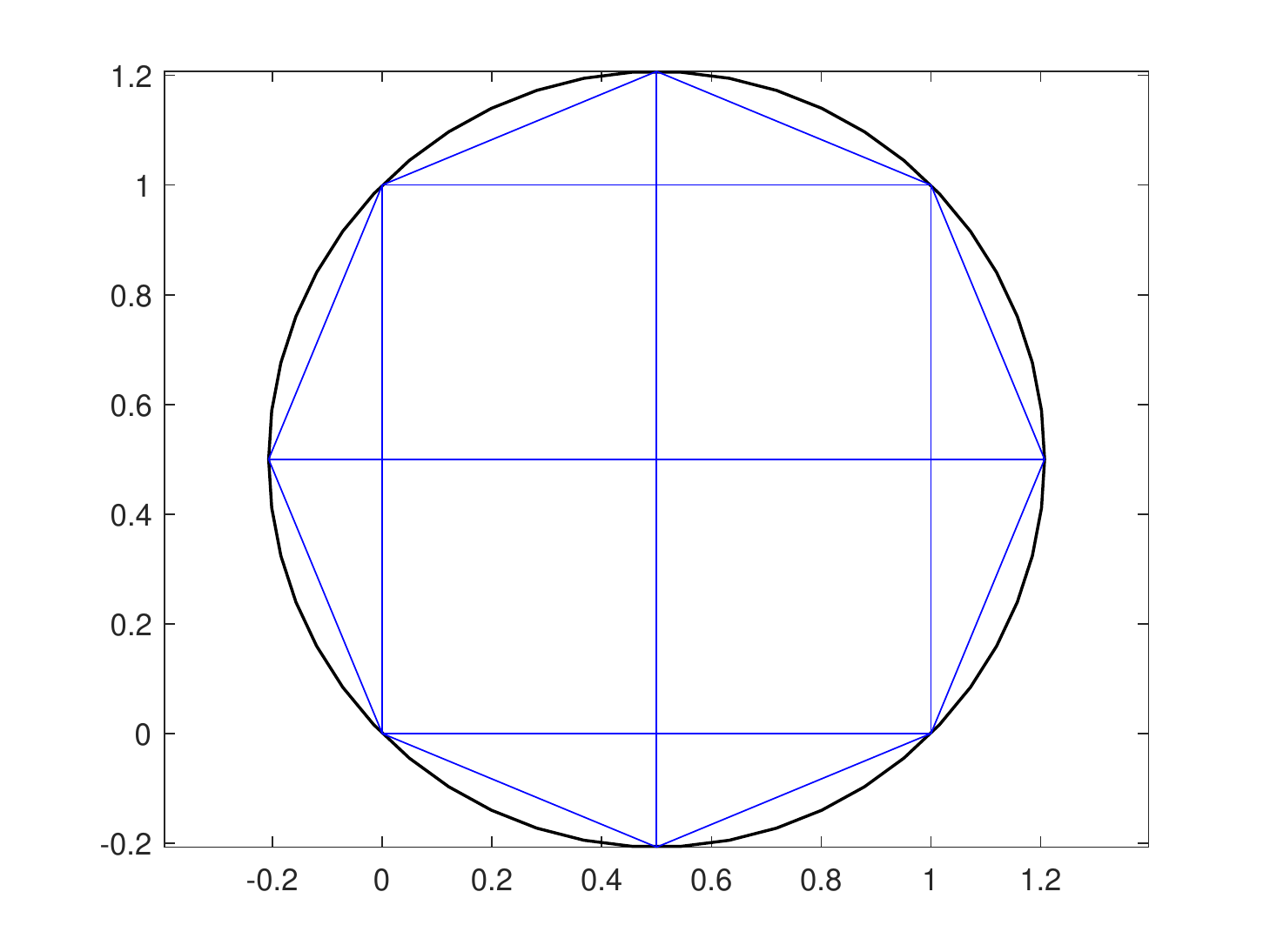}}\\
  \caption{Subdivision of the circular domain}\label{fig:circledom}
\end{figure}

\begin{example}\label{ex:4}
The spatial domain $D$ is a circular region displayed in Fig.~\ref{fig:circledom}. The parameters and the true solution are the same as the last example.
\end{example}
To use the sparse grid method, we first plot a sufficiently large rectangle in the domain and approximate the boundary curve by a polygon as depicted in Fig.~\ref{fig:circledom}~(a). For simplicity, the boundary data corresponding to the polygon is obtained from the exact solution. For the general case, some approximation should be implemented, for example,  the technique from the isoparametric finite elements.  To avoid hanging nodes, the polygon approximation and the corresponding triangulation can be made consistent with the final partition of the rectangle, see Fig.~\ref{fig:circledom}~(b). We should note that the hanging nodes are allowed in our procedure since no interelement continuity is required. Denote the rectangle by $R$ and the other triangles by $T_1,\cdots,T_8$, respectively. Let $\Omega = R\cup {T_1} \cup  \cdots  \cup {T_8}$. We then consider the initial DG space given by
 \[\boldsymbol V_0^k(\Omega) = \boldsymbol V_0^k({R}) \oplus \boldsymbol V_0^k({T_1}) \oplus  \cdots  \oplus \boldsymbol V_0^k({T_8}).\]

 \begin{table}[!htb]
  \centering
  \caption{Orthonormal bases on the reference triangle $\tau$ for $k\le 2$~($r:=\lambda_1, s:=\lambda_2$)}\label{orthobas}
  \begin{tabular}{clllllllllll}
    \hline
    $k=0$    &  \\
             &  $\varphi_1 = \sqrt 2 $\\
    \hline
    $k=1$    &  \\
             &  $\begin{gathered}
  \varphi_2 =  6r-2 \hfill \\
  \varphi_3 = 2\sqrt 3 (2r + s-1) \hfill \\
\end{gathered} $ \\
    \hline
    $k=2$    &  \\
             &  $\begin{gathered}
  \varphi_4 = \sqrt 6 (10r^2 - 8r + 1) \hfill \\
  \varphi_5 = 3\sqrt 2 (5r - 1)(r + 2s - 1) \hfill \\
  \varphi_6 = \sqrt {30} (r^2 + 6rs - 2r + 6s^2 - 6s + 1) \hfill \\
\end{gathered} $ \\
    \hline
  \end{tabular}
\end{table}

The orthonormal bases corresponding to $R$ has been given in the previous section, while the orthonormal bases on each $T_i$ can be obtained by using the Gram-Schmidt procedure. For any triangle $T$ with vertices $z_i = {({x_i},{y_i})}$, $i=1,2,3$, any point $z = {(x,y)}$ can be represented by the barycentric coordinates as
\[
\begin{cases}
  x = x_1\lambda_1 + x_2\lambda_2 + x_3\lambda_3,  \\
  y = y_1\lambda_1 + y_2\lambda_2 + y_3\lambda_3,  \\
  1 = \lambda_1 + \lambda_2 + \lambda_3.
\end{cases}
\]
Note that
\[\iint_T f(x,y)g(x,y){\rm d}x{\rm d}y = 2| T |\int_0^1 \int_0^{1 - \lambda_1} \tilde f(\lambda_1,\lambda_2)\tilde g(\lambda_1,\lambda_2)  {\rm d}\lambda_2{\rm d}\lambda_1,\]
where
\[\tilde f(\lambda_1,\lambda_2) := f(x(\lambda_1,\lambda_2),y(\lambda_1,\lambda_2)).\]
We then define an inner product on the reference triangle $\tau$ by
\[(\tilde f,\tilde g)_\tau = \int_0^1 \int_0^{1 - \lambda_1} \tilde f(\lambda_1,\lambda_2)\tilde g(\lambda_1,\lambda_2)  {\rm d}\lambda_2{\rm d}\lambda_1.\]
Given the orthonormal bases on $\tau$ by $\{\tilde \varphi _i\}$, we then obtain the bases on $T$ given by
\[\psi_i(x,y) = \sqrt {\frac{1}{2| T |}} \varphi_i(x,y).\]
For any function $f(x,y)$ defined on $T$, the projection coefficients are computed as
\begin{align*}
  c_i &= \iint_T f(x,y)\psi_i(x,y){\rm d}x{\rm d}y = 2| T |\int_0^1 \int_0^{1 - \lambda_1} \tilde f(\lambda_1,\lambda_2){{\tilde \psi }_i}(\lambda_1,\lambda_2)  {\rm d}\lambda_2{\rm d}\lambda_1 \\
  & = \sqrt {2| T |} \int_0^1 \int_0^{1 - \lambda_1} \tilde f(\lambda_1,\lambda_2){\tilde \varphi }_i(\lambda_1,\lambda_2) {\rm d}\lambda_2{\rm d}\lambda_1 = :\sqrt {2| T |} {\tilde c}_i.
\end{align*}
The orthogonal bases on $\tau$ are obtained by using Gram-Schmidt procedure to the polynomial set $\{ 1,\lambda_1,\lambda_2,\lambda_1^2,\lambda_1\lambda_2,\lambda_2^2, \cdots  \}$, some of which are listed in Tab.~\ref{orthobas}.

The relative error is defined by ${\rm Err } = \| f - f_h \|_{L^2(\mathcal{T}_h)}/\| f \|_{L^2(\mathcal{T}_h)}$ and given in Tab.~\ref{tab:circl}.
\begin{table}[!htb]
  \centering
  \caption{Relative errors for Example \ref{ex:4}~$(N=2)$}\label{tab:circl}
\begin{tabular}{ccccccccccccccc}
  \hline
    $k$ & 0 & 1   & 2  & 3 \\ \hline
    Err   & 5.8510e-01 & 6.1678e-02 & 9.3273e-03 & 5.5965e-04\\
  \hline
\end{tabular}
\end{table}

Summarizing our main observations from the numerical results reported in all previous examples, we may conclude that
\begin{itemize}
  \item The sparse discrete ordinate DG method can greatly reduce the spatial degrees of freedom while keeping almost the same accuracy up to multiplication of an log factor.
  \item The proposed method is highly effective for problems away from strong forward scattering. To get an improved result, large discrete-ordinate sets are needed for highly forward-peaked case.
  \item The method can be extended to solve the RTE efficiently for some non-tensor product spatial domains in two dimensions.
\end{itemize}

\section{Conclusions and remarks}

In this paper, we combine the sparse grid technique with the discrete ordinate DG method to solve the RTE with inflow boundary conditions, which can be adapted to other types of boundary conditions. Under suitable regularity assumptions, we derive error estimates for the numerical solutions. Results from many numerical examples show the good convergence behavior of the method. For highly forward-peaked scattering, there have been substantial efforts made to develop simpler approximations to integral scattering operator $S$. One well-established example is the so-called Fokker-Planck equation (cf. \cite{Sheng-Han-2013}), to which the sparse grid techniques can also be applied.

\section*{Acknowledgments}

The work was partially supported by NSFC (Grant No. \ 12071289) and the Strategic Priority Research Program of Chinese Academy of Sciences (Grant No. \ XDA25010402).



\end{document}